\newcommand{\gf}[2]{\genfrac{}{}{0pt}{}{#1}{#2}}
\def\fakeht{\vphantom{E^{E_E}_{E_E}}}
\def\br{{\bf r}}
\def\bp{{\bf p}}
\def\Phitilde{\widetilde{\Phi}}
\def \transpose{^{\rm T}}
\def \ms{\medskip}
\def \bs{\bigskip}
\def\Z{\mathbb Z}
\def\ts{\textstyle}
\def\B{\mathbb B}
\def\Sym{\operatorname{Sym}}
\def\kk{{\pmb k}}
\def\ann{\operatorname{ann}}
\def\p{\oplus}
\def\Hom{\operatorname{Hom}}
\def\socle{\operatorname{socle}}
\def\t{\otimes}
\def\Pf{\operatorname{Pf}}
\def\a{\alpha}
\def\map{\operatorname{map}}
\def\proj{\operatorname{proj}}
\def\pdelta{\pmb \delta}
\newtheorem{theorem}{Theorem}[section]
\newtheorem{lemma}[theorem]{Lemma}
\newtheorem{observation}[theorem]{Observation}
\newtheorem{proposition}[theorem]{Proposition}
\newtheorem{corollary}[theorem]{Corollary}
\newtheorem{claim}[equation]{Claim}
\theoremstyle{definition}
\newtheorem{data}[theorem]{Data}
\newtheorem{chunk}[theorem]{}
\newtheorem{example}[theorem]{Example}
\newtheorem{remark}[theorem]{Remark}
\newtheorem{remarks}[theorem]{Remarks}
\numberwithin{equation}{theorem}
\begin{document}

\baselineskip=16pt

\title[Quadratically presented Gorenstein ideals]
{Quadratically presented Gorenstein ideals}

\date{\today}

\author[Sabine El Khoury and Andrew R. Kustin]
{Sabine El Khoury and Andrew R. Kustin}

\address{Mathematics Department,
American University of Beirut,
Riad el Solh 11-0236,
Beirut,
Lebanon}
\email{se24\@aub.edu.lb}

\address{Department of Mathematics\\ University of South Carolina\\\newline
Columbia SC 29208\\ U.S.A.} \email{kustin\@math.sc.edu}

\subjclass[2010]{13C40, 13D02, 13H10, 13E10, 13A02}

\keywords{alternating matrix, Artinian algebra, codimension three,  Gorenstein ideal,  Macaulay inverse system, maximal order Pfaffians, pure resolutions, quadratically presented ideal, weak Lefschetz property.
}

\thanks{We used  Macaulay2 \cite{M2} to test our formulas.}

\begin{abstract} 
Let $J$ be a quadratically presented grade three Gorenstein ideal in the standard graded polynomial ring $R=\kk[x,y,z]$, where $\kk$ is a field. Assume that $R/J$ satisfies the weak Lefschetz property. We give the presentation matrix for $J$ in terms of the coefficients of a Macaulay inverse system for $J$. (This presentation matrix is an alternating matrix and $J$ is generated by the maximal order Pfaffians of the presentation matrix.) Our formulas are computer friendly; they involve only matrix multiplication; they do not involve multilinear algebra or complicated summations. As an application, we give the presentation matrix for $J_1=(x^{n+1},y^{n+1},z^{n+1}):(x+y+z)^{n+1}$, when $n$ is even and the characteristic of $\kk$ is zero. Generators for $J_1$ had been identified previously; but the presentation matrix for $J_1$ had not previously been known. The first step in our proof is to give improved formulas for the presentation matrix of a linearly presented grade three Gorenstein ideal $I$ in terms of the coefficients of the Macaulay inverse system for $I$.  
 \end{abstract}

\maketitle

\section{Introduction.}
Let $R=\kk[x,y,z]$ be a standard graded polynomial ring over the field $\kk$,  $K$ be an ideal of $R$   
generated by homogeneous forms of degree $n$ with $R/K$ Artinian and Gorenstein, and  $s$ be the socle degree of $R/K$. 
If the ideal $K$ is linearly presented, then  $s=2n-2$ and the minimal resolution of $R/K$ by free $R$-modules has the form 
\begin{equation}0\to R(-2n-1)\xrightarrow{b_3} R(-n-1)^{2n+1}\xrightarrow{b_2}R(-n)^{2n+1} \xrightarrow{b_1}R.\label{1.0.2}\end{equation}
If the ideal $K$ is quadratically presented, then $n$ is even, $s=2n-1$, and  the minimal resolution of $R/K$ by free $R$-modules has the form 
\begin{equation}0\to R(-2n-2)\xrightarrow{b_3} R(-n-2)^{n+1}\xrightarrow{b_2}R(-n)^{n+1} \xrightarrow{b_1}R.\label{BN}\end{equation}
The theorem of Buchsbaum and Eisenbud \cite{BE77} guarantees that the matrix $b_2$ is an alternating matrix of linear (respectively, quadratic) forms, $b_1$ is the row vector of signed maximal order Pfaffians of $b_2$, and $b_3$ is the transpose of $b_1$.

The socle of the graded Artinian Gorenstein $\kk$-algebra $R/K$ is the one dimensional vector space $(R/K)_s$. Fix an isomorphism $\sigma: (R/K)_s\to \kk$. Let $\Phi_s:R_s\to \kk$ represent the composition
\begin{equation}\label{||}R_s\xrightarrow{\text{natural quotient map}} (R/K)_s\xrightarrow{\sigma} \kk.\end{equation} (The  
homomorphism $\Phi_s\in \Hom_\kk(R_s,\kk)$ is called a 
 Macaulay inverse system for $K$.) 
We give explicit formulas for the alternating matrix $b_2$ in terms of the constants $\{\Phi_s(\mu_s)\}$ as $\mu_s$ roams over the monomials of $R$ of degree $s$. Our formulas involve only matrix multiplication; they do not involve multilinear algebra or complicated summations; and they are computer friendly.
In the  $s=2n-2$ case, no further hypothesis is needed and  our procedure determines if $K$ is linearly presented; and, if so, then gives the precise formula for $b_2$. In the $s=2n-1$ case, our procedure requires that we know a weak Lefschetz element for $R/K$; once this element is identified, then our procedure  determines if $K$ is quadratically presented; and, if so, then gives the precise formula for $b_2$.

Our answer in the linearly presented case is an extension and reformulation of the result in \cite{EKK2}. 
The new version  
is significantly easier to apply than the original version. In particular, we apply the new version  to the $s=2n-1$ case. 

Our main results concern the $s=2n-1$ case. We became interested in this problem in 2010 when we first suspected that if $\kk$ is a field of characteristic zero and $n$ is even, then the grade three Gorenstein  ideal
\begin{equation}\label{1.0.1}J=(x^{n+1},y^{n+1},z^{n+1}):(x+y+z)^{n+1}\end{equation}  
is quadratically presented. In 2010, we produced $n+1$ elements in the ideal  $J$ of degree $n$ and we conjectured that these elements generate $J$. This conjecture is established in \cite[Prop.~5.7]{KRV22}.
 It is easy to show that $x$ is a weak Lefschetz element on $R/J$ and a Macaulay inverse system for $J$ 
 is well known.
 In Section~\ref{sect5} we obtain the presentation matrix for the ideal $J$ of (\ref{1.0.1}) as an application of our main result, Theorem~\ref{main}.

An Artinian standard graded algebra $A$ over a field $\kk$ satisfies the weak Lefschetz property (WLP) property if there is a linear form $\ell$ in $A$ so that, for each index $i$, the map $A_i\to A_{i+1}$, which is given by multiplication by $\ell$, is either injective or surjective. (In this case, $\ell$ is a weak Lefschetz element for $A$.)
It is difficult to determine which graded Artinian $\kk$-algebras satisfy the WLP. The problem has been attacked from many  points of view, including representation theory, topology, vector bundle theory, plane partitions, splines, and differential geometry, among others; 
see for example, \cite{MT,MN, HMMNWW}. It is  not known if all standard graded codimension three Artinian Gorenstein algebras over a field of characteristic zero satisfy the WLP. Indeed, it is not known if all quotient rings $R/J$ resolved by (\ref{BN}) necessarily satisfy the WLP.

\bs
We describe the approach taken in this note. 
\begin{chunk}
\label{approach}
Let $J\subseteq R$ be a homogeneous grade three ideal so that the minimal homogeneous resolution of $R/J$ by free $R$-modules has the  Betti numbers of (\ref{BN}).  
 Let $U$ be the vector space of homogeneous elements of $R$ of degree $1$. (So $R$ is equal to the symmetric algebra $\Sym_\bullet U$.) The divided power algebra $D_\bullet U^*=\bigoplus_{i=0}^\infty D_iU^*$, with $D_iU^*=\Hom_{\kk}(\Sym_iU,\kk)$, is the graded dual of $\Sym_\bullet U$. The algebras $\Sym_\bullet U$ and $D_\bullet U^*$ are modules over one another and the ideal $J$ is equal to $\ann_{\Sym_\bullet U}\Phi_{2n-1}$, for some $\Phi_{2n-1}\in D_{2n-1}U^*$. The element $\Phi_{2n-1}$ in $D_{2n-1}U^*$
is called a Macaulay inverse system of $J$. This element is unique up to multiplication by a non-zero constant of $\kk$. In the case of (\ref{1.0.1}), 
\begin{equation}\Phi_{2n-1}=(x+y+z)^{n+1}\left(
(x^ny^nz^n)^*
\right).\label{PHI}\end{equation} The symbols and module action in 
(\ref{PHI}) 
 are explained in \ref{x^*}.
\end{chunk}

The hypothesis that $R/J$ satisfies the WLP ensures that there is a non-zero element $\ell$ of $U$ for which 
the ideal $I=\ann_{\Sym_{\bullet}U}\ell(\Phi_{2n-1})$ of $R$  
is a linearly presented grade three Gorenstein  ideal. We choose a basis $x,y,z$ for $U$ with $x=\ell$; and we let $U_0$ be the subspace of $U$ which is spanned by $y$ and $z$. The minimal homogeneous resolution of $R/I$ by free $R$-modules is given in \cite{EKK2}. This resolution has the form 
\begin{equation}\notag 
 \B:\quad 0\to B_3\xrightarrow{b_3} B_2\xrightarrow{b_2} B_1\xrightarrow{b_1} B_0,\end{equation} with
$B_3=B_0=R$,  
$$B_2=\left\{\begin{matrix} R\t_{\kk} \Sym_{n-1}U_0\\\p\\R\t_{\kk} D_nU_0^*\end{matrix}\right.\quad \text{and}\quad B_1=\left\{\begin{matrix} R\t_{\kk} D_{n-1}U_0^*\\\p\\R\t_{\kk} \Sym_nU_0,\end{matrix}\right.$$
The matrix for $b_2$ looks like 
\begin{equation}\label{nny}\left[\begin{array}{c|c} A&B\\\hline\fakeht-B\transpose&D\end{array}\right],\end{equation} with $A$ and $D$ alternating matrices, $A=xA'$ where $A'$ is an $n\times n$ alternating matrix of constants. The graded Betti numbers of $\B$ are given in (\ref{1.0.2}).
Explicit formulas for the differentials of $\B$ are given in Theorem~\ref{NewTheorem}.

The hypothesis that the ideal $J$ is quadratically presented forces 
the matrix $A'$ 
to be  invertible. It follows quickly  that
 the resolution of $R/J$ is
$$0\to R\xrightarrow{{b_1'}^*}
R\t_{\kk} D_nU_0^* \xrightarrow{B\transpose (A')^{-1}B+xD}
R\t_{\kk} \Sym_nU_0\xrightarrow{b_1'} R,$$
where $b_1'$ is the restriction of $b_1$ to the summand  $R\t_{\kk} \Sym_nU_0$ of $B_1$. 
 The parity of $n$ plays a critical role because, if $n$ had been  odd, then the 
 $n\times n$ alternating matrix of constants $A'$ would 
 necessarily be singular.

\tableofcontents

\section{Notation, conventions, and elementary results.}\label{Prelims}

In this paper ${\kk} $ is always a field and  the symbol ``$D_iG^*$'' always means $D_i(G^*)$.  See \cite[Thms.~21.5 and 21.6]{E95} for a quick treatment of Artinian Gorenstein rings and Macaulay duality. 

\begin{chunk} When it is clear that ``A'' is the ambient ring, we use 
$(-)^*$, $\Sym$, $D$, $\t$, $:$, and $\ann$ to mean $\Hom_A(-,A)$, $\Sym^A$, $D^A$, $\t_A$, $:_A$, and $\ann_A$, respectively. \end{chunk}

\begin{chunk} The {\it grade} of an ideal $I$ in a commutative Noetherian ring $R$ is the length of a
maximal regular sequence on $R$ which is contained in $I$.\end{chunk}

\begin{chunk} 
If $M$ is a graded module, then $M_i$ denotes the component of $M$ of  homogeneous elements of degree $i$ and $M=\oplus_i M_i$. \end{chunk}

\begin{chunk} 
For any set of variables $\{x_1,\dots,x_r\}$ and any degree $s$, we write
$\binom{x_1,\dots,x_r} s$ for   the set of monomials of degree $s$ in the variables $x_1,\dots,x_r$.\end{chunk}

\begin{chunk}
\label{x^*} We explain the symbols and module action of $$(x+y+z)^{n+1}\left(
(x^ny^nz^n)^*
\right)
$$from (\ref{PHI}).

If $A$ is a commutative ring and $G$ is a finitely generated free $A$-module, then $D^A_\bullet G$ is the graded dual of the $A$-module $\Sym^A_\bullet G$. In other words, 
$$D^A_\bullet G=\bigoplus_{i=0}^\infty D^A_iG^*$$and $D^A_iG^*=\Hom_A(\Sym^A_iG,A)$.
The $A$-module $D^A_\bullet G$ is a $\Sym^A_\bullet G$-module under the action $u_i\in\Sym^A_iG$ sends $w_j\in D^A_jG^*$ to the element $u_i(w_j)$ in $D^A_{j-i}G^*$,  where 
$u_i(w_j)$ sends $u_{j-i}$ to $(u_{j-i}u_i)(w_j)$, for $u_{j-i}$ in $\Sym^A_{j-i}G$.
If $x_1,\dots,x_g$ is basis for $G$, then 
$$\binom{x_1,\dots,x_g}i =\text{ the set of monomials of degree $i$ in $x_1,\dots,x_g$}$$ is a basis for $\Sym^A_iG$, and $\{m^*\mid m\in \binom{x_1,\dots,x_g}i
\}$ is a basis for $D_iG^*$, where for each $m\in \binom{x_1,\dots,x_g}i$,
$m^*:\Sym^A_iG\to A$ is defined by
$$m^*(m')=\begin{cases} 1,&\text{if $m'=m$,}\\0,&\text{if $m'\neq m$},\end{cases}$$ for $m'\in \binom{x_1,\dots,x_g}i$.
Observe that $$x_\ell(m^*)=\begin{cases} 0,&\text{if $x_\ell\not| m$},\\
(\frac m {x_\ell})^*, &\text{if $x_\ell| m$},\end{cases}$$
for $m\in \binom{x_1,\dots,x_g}i$.
\end{chunk}

\begin{chunk}
The graded ring $R=\bigoplus
_{0\le i}R_i$ is a {\it standard graded $R_0$-algebra} if $R$ is generated as an $R_0$-algebra by $R_1$ and $R_1$ is finitely generated as an $R_0$-module. 
\end{chunk}

\begin{chunk}
\label{2.7}
 If $M$ is a matrix, then $M\transpose$ is the transpose of $M$. The matrix $M$ is an {\it alternating matrix} if $M+M\transpose=0$ and the entries of $M$ on the main diagonal are all zero. 
Let $M$ be an $m\times m$ alternating matrix. The Pfaffian of $M$ is a square root of a determinant of $M$. (For a more detailed formulation see, for example \cite[3.7]{KPU17} or \cite[Appendix and Sect.~2]{BE77}.) If $m$ is odd, 
then the Pfaffian of $M$ is zero and the   {\it row vector of signed maximal order Pfaffians of $M$} is
$$\bmatrix M_1&\dots&M_m\endbmatrix,$$ where $M_j$ is equal to $(-1)^{j+1}$ times the Pfaffian of $M$ with row and column $j$ deleted.
The product $\bmatrix M_1&\dots&M_m\endbmatrix M$ is equal to zero.
\end{chunk} 

\begin{chunk}
\label{order} Let $U$ be the vector space, over the field $\kk$,  with basis $x,y,z$; and let $U_0$ be the subspace of $U$ spanned by $y$ and $z$. 
We make matrices for $\kk$-module homomorphisms between $\Sym_iX$ and $D_iX^*$ for various $i$ and for $X$ equal to either $U$ or $U_0$. Here are the bases we use.
 
If 
$i$ is a positive integer, then our favorite ordered basis for $\Sym_iU$ is 
$$x^i,\ x^{i-1}y,\ x^{i-1}z,\ x^{i-2}y^2,\ x^{i-2}yz,\ x^{i-2}z^2,\ \dots,\ y^i,\ y^{i-1}z,\ \dots,\ yz^{i-1},\ z^i.$$ In other words, if $a+b+c$ and $\a+\beta+\gamma$ are both equal to $i$, then $x^ay^bz^c$ comes before $x^\a y^\beta z^\gamma$ if 
$$\a<a;\quad\text{or else,}\quad \a=a\text{ and }\beta<b.$$
In particular, if $m_1,\ \dots,\ m_{\binom {i+1}2}$ is our favorite basis for $\Sym_{i-1}U$, then our favorite basis for $\Sym_iU$ is 
\begin{equation}\label{favorite}xm_1,\ \dots,\ xm_{\binom {i+1}2},\ y^i,\ y^{i-1}z,\ \dots,\ yz^{i-1},\ z^i;\end{equation} our favorite basis for $D_{i-1}U^*$ is
 $(m_1)^*,\ \dots,\ (m_{\binom {i+1}2})^*$; our favorite ordered basis for $\Sym_iU_0$ is   $$y^i,\ y^{i-1}z,\ \dots,\ yz^{i-1},\ z^i;$$ and our favorite ordered basis for $D_iU_0^*$ is $$(y^i)^*,\ (y^{i-1}z)^*,\ \dots,\ (yz^{i-1})^*,\ (z^i)^*.$$
\end{chunk}

\begin{chunk}Let $I$ be an ideal in a ring $A$, $N$ be an $A$-module, and  $L$ and $M$ be submodules of $N$. Then 
$$L:_IM=\{x\in I\mid xM\subseteq L\}\quad\text{and}\quad L:_MI=\{m\in M\mid Im\subseteq L\}.$$If $L$ is the zero module, then we also use ``annihilator notation'' to describe these ``colon modules''; that is,
$$\ann_AM=0:_AM\quad\text{and}\quad \ann_NI=0:_NI.$$
\end{chunk}

\begin{chunk}
If $A$ is a local ring with maximal ideal $\mathfrak m$ and $M$ is an $A$-module, then the {\it socle} of $M$ is the vector space $\socle(M)=0:_M\mathfrak m$.
If $A$ is a graded Artinian local Gorenstein $\kk$-algebra, then 
$$s=\max\{i|A_i\neq 0\}$$ is called the {\it socle degree} of $A$.
\end{chunk}

\begin{chunk}
\label{J18} 
Let $R$ be a Noetherian graded $\kk$-algebra. A finitely generated $R$-module $M$ is {\it linearly presented} if the minimal homogeneous resolution of $M$ by free $R$-modules has the form 
$$\dots\to F_2 \xrightarrow{d_2}F_1\xrightarrow{d_1} F_0,$$ and all of the entries of the matrix $d_2$ are linear forms in $R$.

The following result gives many conditions which are equivalent to the statement ``the grade three Gorenstein ideal $I$ is linearly presented''. The initial hypotheses of the  
Proposition 
 ensure that $\Phi_s$ is a Macaulay inverse system for $I$ and that the socle degree of $R/I$ is $s$.
\begin{proposition}
\label{1.8}
  {\bf \cite[Prop.~1.8]{EKK1}} Fix positive integers $n$ and $s$. Let $\pmb k$ be a field, 
$U$ be a three dimensional vector space over $\kk$, $\Phi_s$ be an element of $D_{s}^{\pmb k}U^*$, and $I$ be the ideal $\ann_{\Sym U}\Phi_s$ in the standard graded  polynomial ring $R=\Sym^\kk_\bullet U$.  
 Then the  following statements are equivalent\,{\rm:}
\begin{enumerate}[\rm(a)]
\item 
$I$ is generated by homogeneous forms of degree $n$ and $I$ is linearly presented,  
\item the minimal homogeneous resolution of $R/I$ by free $R$-modules has the form
$$0\to R(-2n-1)\to R(-n-1)^{2n+1}\to R(-n)^{2n+1}\to R,$$
\item all of the minimal generators of $I$ have degree  $n$  and 
 $s=2n-2$,
\item $I_{n-1}=0$ and $(R/I)_{2n-1}=0$,
\item $s=2n-2$ and the homomorphism $p:\Sym_{n-1}^{\pmb k}U\to D_{n-1}^{\pmb k}U^*$, which is given by $$p(\mu_{n-1})=\mu_{n-1}(\Phi_s),\quad\text{for $\mu_{n-1}\in \Sym_{n-1}U$},$$ 
 is an isomorphism, and 
\item\label{2.12.f} 
$s=2n-2$  
and the  $\binom{n+1}2 \times \binom{n+1}2$ matrix
$$(\Phi_s(m_im_j))_{1\le i,j\le \binom{n+1}2},$$ is invertible,
where $m_1,\dots m_{\binom{n+1}2}$ is the ordered basis of $\Sym_{n-1}U$ which is given in {\rm\ref{order}}.
\end{enumerate}
\end{proposition}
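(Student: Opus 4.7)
I would prove the six conditions equivalent via the cycle (a)$\Leftrightarrow$(b)$\Leftrightarrow$(c)$\Leftrightarrow$(d)$\Leftrightarrow$(e)$\Leftrightarrow$(f), using two main tools: the Buchsbaum--Eisenbud structure theorem, which says the minimal free resolution of $R/I$ has length three, ends in $F_3=R(-s-3)$, and satisfies $F_2\cong F_1^*(-s-3)$; and the symmetry of the Hilbert function of a graded Artinian Gorenstein algebra.

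\textbf{(a)$\Leftrightarrow$(b)$\Leftrightarrow$(c).} Both (a) and (c) force $F_1=R(-n)^{\mu}$, so self-duality gives $F_2=R(-(s+3-n))^{\mu}$. In (a), linear presentation forces $s+3-n=n+1$, hence $s=2n-2$; in (c), $s=2n-2$ is given, so $F_2=R(-n-1)^{\mu}$. Divisibility of the Hilbert-series numerator $1-\mu t^n+\mu t^{n+1}-t^{2n+1}$ by $(1-t)^3$, with one derivative evaluated at $t=1$, forces $\mu=2n+1$, which gives (b). The converses (b)$\Rightarrow$(a) and (b)$\Rightarrow$(c) are immediate from the degrees in the resolution.

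\textbf{(c)$\Leftrightarrow$(d).} The forward direction is immediate. For the converse, $I_{n-1}=0$ yields $h_i=\binom{i+2}{2}$ for $0\le i\le n-1$, while $(R/I)_{2n-1}=0$ forces $s\le 2n-2$, so $s-n+1\le n-1$; Gorenstein symmetry then gives $\binom{s-n+3}{2}=h_{s-n+1}=h_{n-1}=\binom{n+1}{2}$, forcing $s=2n-2$. To exclude minimal generators of degree greater than $n$, write $F_1=\bigoplus R(-b_j)$ with $b_j\ge n$; self-duality gives $F_2=\bigoplus R(-(2n+1-b_j))$, and the $(k,j)$-entry of $d_2$ has degree $2n+1-b_k-b_j$. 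If some $b_{j_0}>n$, every entry in column $j_0$ has nonpositive degree, so minimality forces that column to be zero, and then the corresponding basis vector of $F_2$ is a minimal generator lying in $\ker d_2\subseteq \mathfrak m F_2$, a contradiction.

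\textbf{(d)$\Leftrightarrow$(e)$\Leftrightarrow$(f).} Under the hypothesis $s=2n-2$ of (e), the identification $\ker p=I_{n-1}$ is direct from the definitions: $p(\mu_{n-1})=\mu_{n-1}(\Phi_s)\in D_{n-1}U^*$, and $\mu_{n-1}(\Phi_s)=0$ is exactly the condition $\mu_{n-1}\in\ann_{\Sym U}\Phi_s=I$. Since $\dim\Sym_{n-1}U=\dim D_{n-1}U^*=\binom{n+1}{2}$, $p$ is an isomorphism iff it is injective iff $I_{n-1}=0$; hence (e) is equivalent to the conjunction ``$s=2n-2$ and $I_{n-1}=0$'', which coincides with (d) (using the previous step to extract $s=2n-2$ from (d), and the trivial converse $s=2n-2\Rightarrow (R/I)_{2n-1}=0$). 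The equivalence (e)$\Leftrightarrow$(f) is mechanical: in the bases $\{m_i\}$ of $\Sym_{n-1}U$ and $\{m_j^*\}$ of $D_{n-1}U^*$, the matrix of $p$ has entry $(p(m_i))(m_j)=\Phi_s(m_im_j)$, so $p$ is invertible iff the matrix in (f) is. The only genuinely delicate point in the whole chain is the degree-count in (d)$\Rightarrow$(c) that excludes minimal generators of degree greater than $n$: Hilbert-series counting alone admits cancelling generator/relation pairs in complementary degrees $d$ and $2n+1-d$, and the exclusion relies essentially on combining minimality of the resolution with Buchsbaum--Eisenbud self-duality.
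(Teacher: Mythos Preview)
The paper does not prove this proposition; it is quoted from \cite[Prop.~1.8]{EKK1} and used as background. So there is no ``paper's own proof'' to compare against here.

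Your argument is correct and complete. The logical flow is the natural one: (a), (b), (c) are interchanged using self-duality $F_2\cong F_1^*(-s-3)$ of the Buchsbaum--Eisenbud resolution together with the Hilbert-series identity, and (e), (f) are transparent restatements of each other once one writes down the matrix of $p$. The only step with real content is (d)$\Rightarrow$(c), and your handling of it is clean: the symmetry $h_{n-1}=h_{s-n+1}$ pins down $s=2n-2$, and the exclusion of minimal generators in degree $>n$ via the degree count $\deg(d_2)_{k,j}=2n+1-b_k-b_j\le 0$ together with minimality is exactly the right mechanism. One small point you use implicitly and might make explicit: $I_{n-1}=0$ forces $I_i=0$ for all $i\le n-1$ (since $R$ is a domain, a nonzero element of $I_i$ with $i<n-1$ would produce a nonzero element of $I_{n-1}$), so the minimal generators really do sit in degrees $\ge n$ before you run the contradiction argument.
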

The matrix of item (\ref{2.12.f}) has the element $\Phi_s(m_im_j)$ of $\kk$ in row $i$ and column $j$.
\end{chunk}

\section{A computer friendly form of \cite{EKK2}.}
\label{B}

In a sequence of three papers, we gave explicit formulas for the differentials in the minimal homogeneous resolution of $R/I$ by free $R$-modules, where $R=\kk[x_1,\dots,x_g]$ is a standard-graded polynomial ring over the field $\kk$ and $I$ is a grade $g$ homogeneous Gorenstein ideal, provided the resolution is Gorenstein-linear in the sense that the resolution has the form
$$0\to F_g\xrightarrow{f_g}F_{g-1}\xrightarrow{f_{g-1}}F_{g-2}\xrightarrow{f_{g-2}}\dots\xrightarrow{f_3}F_{2}\dots\xrightarrow{f_2}F_{1}\dots\xrightarrow{f_1}R$$
and all the entries of all of the matrices $f_i$, with $2\le i\le g-1$, are linear. The formulas are given in terms of the coefficients of a Macaulay inverse system for $I$. 
The  paper \cite{EKK1} proves that the project can be done;  \cite{EKK2} carries out the project when $g=3$, and \cite{EKK3} carries out the project for all $g$.

 Theorem~\ref{NewTheorem}, which is adapted from \cite[Thm.~4.1]{EKK2}, is the starting point for the present paper.

\begin{theorem}
\label{NewTheorem}  Let $\kk$ be a field, $U$ be a vector space over $\kk$ of dimension $3$ with basis $x,y,z$, and $U_0$ be the subspace of $U$ spanned by $y$ and $z$. Let $n$ be a positive integer, $\Phi_{2n-1}$ be an element of $D_{2n-1}U^*$, $R$ be the polynomial ring $\Sym_\bullet ^\kk U$, and $I$ be the ideal $$I=\ann_{\Sym U}(x\Phi_{2n-1})$$ of $R$. Let $$m_1,\dots,m_{\binom{n+1}2}\quad\text{and}\quad m_{0,1},\dots,m_{0,n+1}$$ be the ordered bases of $\Sym_{n-1}U$ and  $\Sym_nU_0^*$, respectively, which are given in {\rm\ref{order}}. 
Let $\bp$ and $\br$ be the matrices
$$\bp=(xm_i m_{j}(\Phi_{2n-1}))_{1\le i,j \le \binom{n+1}2}
\quad\text{and}\quad
\br=(m_i m_{0,j}(\Phi_{2n-1}))_{\gf{1\le i\le \binom{n+1}2}{1\le j\le n+1}}.$$
Then the following statements hold.
\begin{enumerate}[\rm (a)]
\item The ideal $I$ is linearly presented if and only if the matrix $\bp$ is invertible.
\item\label{NewTheorem.b} If $\bp$ is invertible, then 
 the minimal homogeneous resolution of $R/I$ 
by free $R$-modules 
is 
$$0\to R(-2n-1)\xrightarrow{b_3} R^{2n+1}(-n-1)\xrightarrow{b_2}R^{2n+1}(-n)\xrightarrow{b_1}R,$$
where 
$$b_2=\left[\begin{array}{c|c} A& B\\\hline\fakeht
-B\transpose&D\end{array}\right],$$
$b_1$ is the row vector of signed maximal order Pfaffians of $b_2$,  $b_3=b_1\transpose$,
$$A=xA',\quad A'=A_0-A_0\transpose,\quad  B=xB_1+B_2,\quad 
D=x(D_0-D_0\transpose),$$
\begingroup\allowdisplaybreaks\begin{align*}
A_0={}&([\br\transpose \bp^{-1}\br]
\text{ \rm with row $n+1$  and 
column $1$ deleted}),\\ B_0={}&\text{\rm the right most $n$ columns of $\br\transpose\bp^{-1}$},\\
B_1={}&\left[\begin{array}{c|c}0_{n\times 1}& \begin{array}{c}\text{\rm $B_0$ with}\\\text{\rm row $n+1$}\\ \text{\rm deleted}\end{array}
\end{array}\right]-\left[\begin{array}{c|c} \begin{array}{c}\text{\rm $B_0$ with}\\
\text{\rm row $1$}\\\text{\rm deleted}\end{array}& 0_{n\times 1} \end{array}\right],\\
B_2={}&\left[\begin{array}{c|c}zI_n&0_{n\times 1}\end{array}\right]
-      \left[\begin{array}{c|c}0_{n\times 1}&yI_n\end{array}\right],\\
D_0={}&\left[ \begin{array}{c|c} 0_{n\times 1}&\text{\rm the bottom right $n\times n$ submatrix of $\bp^{-1}$}\\\hline
0_{1\times 1}&0_{1\times n}\end{array}
\right].\end{align*}
\endgroup
\end{enumerate}
\end{theorem}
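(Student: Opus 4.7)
The proof splits naturally into the two parts. For part (a), the plan is to invoke Proposition~\ref{1.8} directly: the hypothesis identifies $x\Phi_{2n-1}$ as a Macaulay inverse system for $I$ with socle degree $s=2n-2$, and the matrix of condition (\ref{2.12.f}) of that proposition has $(i,j)$-entry $(x\Phi_{2n-1})(m_im_j)=\Phi_{2n-1}(xm_im_j)$, which is exactly the $(i,j)$-entry of $\bp$. So linear presentation of $I$ is equivalent to invertibility of $\bp$.

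For part (b), the strategy is to start from Theorem~4.1 of \cite{EKK2}, which already provides a minimal homogeneous resolution of $R/I$ of the stated Betti shape, but with differentials written in terms of intrinsic maps between symmetric and divided powers of $U$ and $U_0$. The content of the present theorem is to repackage those maps as explicit matrix products, requiring nothing beyond matrix multiplication and transposition. The first step is to recognize, in the favorite bases of \ref{order}, the map $p\colon \Sym_{n-1}U \to D_{n-1}U^*$ sending $\mu\mapsto\mu(x\Phi_{2n-1})$: its matrix is precisely $\bp$, so $p^{-1}$ is represented by $\bp^{-1}$. The matrix $\br$ similarly encodes the pairings $(m_i,m_{0,j})\mapsto m_im_{0,j}(\Phi_{2n-1})$ between $\Sym_{n-1}U$ and $\Sym_nU_0$. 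Every map in the EKK2 formulas that factors through $p^{-1}$ thereby converts into left-multiplication by $\bp^{-1}$.

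The second step is block-wise bookkeeping along the decomposition of the middle free modules into
$$R\t D_{n-1}U_0^*\ \p\ R\t \Sym_nU_0\quad\text{and}\quad R\t \Sym_{n-1}U_0\ \p\ R\t D_nU_0^*,$$
which presents $b_2$ as the $2\times 2$ block matrix of (\ref{nny}). The off-diagonal block $B$ splits as $xB_1+B_2$, where $B_2$ records the natural Koszul-style contractions by $y$ and $z$ appearing in \cite{EKK2}, while $xB_1$ is assembled by deletion and zero-padding operations on $B_0$, the portion of $\br\transpose\bp^{-1}$ indexed by the $\Sym_{n-1}U_0$-monomials inside $\Sym_{n-1}U$. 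The diagonal blocks $A$ and $D$ arise from the sandwich $\br\transpose\bp^{-1}\br$ (with the boundary row and column discarded so as to land in the correct basis) and from the lower-right $n\times n$ submatrix of $\bp^{-1}$, respectively, each skew-symmetrized to enforce the alternating property forced by \cite{BE77}. The vectors $b_1$ and $b_3$ are recovered from $b_2$ via Buchsbaum--Eisenbud.

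The main obstacle is verifying that after this translation $A$ really equals $xA'$ with $A'$ a constant alternating matrix, that $D$ is alternating and divisible by $x$, and that $B$ admits the prescribed decomposition $xB_1+B_2$. These identities rest on symmetry properties of $\bp^{-1}$ (reflecting the symmetry of the pairing induced by $\Phi_{2n-1}$) and on compatibility of the inclusion $\Sym_iU_0 \hookrightarrow \Sym_iU$ with the boundary operators used in \cite{EKK2}. Once these structural identities are in hand, an entry-by-entry comparison with \cite[Thm.~4.1]{EKK2} shows that the two descriptions of $b_2$ coincide.
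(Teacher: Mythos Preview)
Your plan for part~(a) is exactly right and matches the paper. For part~(b), however, there is a real gap between \cite[Thm.~4.1]{EKK2} and the present statement that your proposal does not address.

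The formulas in \cite[Thm.~4.1]{EKK2} are written in terms of a \emph{particular} lift $\widetilde{\Phi}_{2n-1}\in D_{2n-1}U^*$ of the inverse system $x\Phi_{2n-1}\in D_{2n-2}U^*$: namely the lift whose $D_{2n-1}U_0^*$-component vanishes (the element of (\ref{Phi2n-1})). The theorem you are proving, by contrast, is stated for an \emph{arbitrary} element $\Phi_{2n-1}$ with the given $x$-image. The matrix $\bp$ only sees $x\Phi_{2n-1}$ and is therefore insensitive to the choice of lift, but $\br$ genuinely changes: if $m_i\in\Sym_{n-1}U_0$ then $m_im_{0,j}\in\Sym_{2n-1}U_0$, so the bottom $n$ rows of $\br$ depend on the $U_0$-part of $\Phi_{2n-1}$. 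Thus a direct ``entry-by-entry comparison with \cite[Thm.~4.1]{EKK2}'' would produce the matrices built from $\widetilde{\Phi}_{2n-1}$, not the ones built from $\Phi_{2n-1}$ as claimed in the statement.

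The paper handles this by inserting an intermediate step (Theorem~\ref{3.3}): it writes down explicit isomorphisms $\theta_1,\theta_2$ of the free modules, built from the correction term $\rho_{0,2n-1}=\Phi_{2n-1}-\widetilde{\Phi}_{2n-1}\in D_{2n-1}U_0^*$, and checks that conjugating the $\widetilde{\Phi}_{2n-1}$-resolution by these isomorphisms yields the same formulas with $\Phi_{2n-1}$ in place of $\widetilde{\Phi}_{2n-1}$. Only after that does the paper carry out the translation into matrices that you sketch. Your outline of that translation (identifying $\bp$ with the matrix of $p$, decomposing $b_2$ into six component maps, tracking the row/column deletions coming from multiplication and contraction by $y,z$, and using the symmetry of $\bp$ to get the alternating structure) is on target and matches the paper's argument in \ref{proof}; but without the $\widetilde{\Phi}_{2n-1}\leadsto\Phi_{2n-1}$ step your proof would only establish the theorem for the special lift, not for the $\Phi_{2n-1}$ in the hypothesis.
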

\begin{remarks} 
\begin{enumerate}
[\rm(a)]
\item We write $I_a$ for the $a\times a$ identity matrix and $0_{a\times b}$ for the zero matrix with $a$ rows and $b$ columns. 
\item The entry of $\bp$ in row $i$ and column $j$ is the element
$$xm_im_j(\Phi_{2n-1})=\Phi_{2n-1}(xm_im_j)$$ of $\kk$; similarly, 
the entry of $\br$ in row $i$ and column $j$ is the element
$$m_im_{0,j}(\Phi_{2n-1})=\Phi_{2n-1}(m_im_{0,j})$$ of $\kk$.
\item The element $x\Phi_{2n-1}\in D_{2n-2}U^*=\Hom_{\kk}(\Sym_{2n-2}U,\kk)$ is a Macaulay inverse system for $I$. If one starts with a Macaulay inverse system $\Phi_{2n-2}$ for $I$, then one may take $\Phi_{2n-1}$ to be any element of $D_{2n-1}U^*$ with $x\Phi_{2n-1}=\Phi_{2n-2}$. In particular, if
$$\Phi_{2n-2}=\sum_{m\in \binom {x,y,z}{2n-2}}\a_m m^*,$$ with $\a_m\in \kk$, then one may take $\Phi_{2n-1}$ to be
\begin{equation}\label{Phi2n-1}\sum_{m\in \binom {x,y,z}{2n-2}}\a_m (xm)^*;\end{equation}however one is not required to make this choice.
\end{enumerate}
\end{remarks}

\begin{table}
\begin{center}
$$\begin{array}{|c|c|c|}\hline
\text{matrix}&\text{number of rows}&\text{number of columns}\\\hline
\br&\binom{n+1}2&n+1\\\hline
\bp&\binom{n+1}2&\binom{n+1}2\\\hline
A&n&n\\\hline
B&n&n+1\\\hline
D&n+1&n+1\\\hline
B_0&n+1&n\\\hline
A_0&n&n\\\hline
B_1&n&n+1\\\hline
B_2&n&n+1\\\hline
D_0&n+1&n+1\\\hline
b_1&1&2n+1\\\hline
b_2&2n+1&2n+1\\\hline
b_3&2n+1&1\\\hline
\end{array}$$
\caption{{The shapes of the matrices of Theorem~\ref{NewTheorem}.}} 
\end{center}
\end{table}

\bigskip
Theorem~\ref{NewTheorem} is obtained from \cite{EKK2} in three steps. 
Theorem~\ref{3.2} is  
the multilinear algebra version of the result from \cite{EKK2} (from a different point of view); Theorem~\ref{3.3} is a crucial extension of Theorem~\ref{3.2}; and Theorem~\ref{NewTheorem} is a matrix version of Theorem~\ref{3.3}. The conversion from the language of Theorem~\ref{3.3} to the language of Theorem~\ref{NewTheorem} is given in \ref{proof}.
 Theorem~\ref{NewTheorem} is easier to apply than anything in \cite{EKK2}.

\bigskip
 There are two differences between the resolution as given in \cite{EKK2} and the resolution of Theorem~\ref{3.2}. 
In \cite{EKK2} the coefficients of a Macaulay inverse system are treated as variables and are specialized to field elements only at the last step. In the present paper, we work in $\kk[x,y,z]$ rather than
$$\mathfrak R=\Z[x,y,z,\{t_{m}\mid m\text{ is a monomial in $x$, $y$, $z$ of degree $2n-2$}\}].$$ In \cite{EKK2}, the map $p$ of Data~\ref{3.1} is a map of free $\mathfrak R$-modules. Each entry of the matrix for $p$ is a variable $t_m$ 
 of the polynomial ring $\mathfrak R$. In \cite{EKK2}, the map $p$ does not have an inverse until $\pdelta$ (which is essentially the determinant of $p$) is inverted. In the present paper, $p$ is an isomorphism of vector spaces; hence, $p$ has an inverse. We write $p^{-1}$ and avoid $q$ (which is essentially the classical adjoint of $p$) and $\pdelta$. Theorem~{4.1} of \cite{EKK2}, with the above two changes of point of view, is recorded as Theorem~\ref{3.2}. 

\bigskip In fact, we make one more change to \cite[4.1]{EKK2} before we can use it in the present situation. The paper \cite{EKK2} makes use of the ``$\Phitilde_{2n-1}$'' of Data~\ref{3.1}; however, \cite{EKK2} makes no mention of the $\Phi_{2n-1}$ of Data~\ref{3.1}. This last fact is unfortunate from our point of view, because $\Phi_{2n-1}$ is a Macaulay inverse system for the quadratically presented ideal $J$, which is the topic of of study in the present paper. In Theorem~\ref{3.3}, we prove that 
if one uses $\Phi_{2n-1}$ in place of $\Phitilde_{2n-1}$ in the maps of Theorem~\ref{3.2}, then one obtains a resolution of $R/I$ which is isomorphic to the resolution of Theorem~\ref{3.2}. We emphasize that Theorem~\ref{3.3} is an extension of Theorem~\ref{3.2}; it is more than a change of notation.
The element of (\ref{Phi2n-1}) is the ``$\Phitilde_{2n-1}$'' of Data~\ref{3.1}. The technique of \cite{EKK2} becomes more valuable when one allows $\Phi_{2n-1}$ to be any element of $D_{2n-1}U^*$ with $x(\Phi_{2n-1})=\Phi_{2n-2}$ rather than insisting  that $\Phi_{2n-1}$ be the element ``$\Phitilde_{2n-1}$'' of
(\ref{Phi2n-1}) and Data~\ref{3.1}.

\begin{data}
\label{3.1} Let $\kk$ be a field, $U$ be a vector space over $\kk$ of dimension $3$, $R$ be the polynomial ring $R=\Sym_\bullet U$, and $n$ be a positive integer.
Decompose $U$ as $\kk x\p U_0$ where $x$ is a non-zero element of $U$ and $U_0$ is a two-dimensional subspace of $U$. Let $\Phi_{2n-1}$ be an element of $D_{2n-1}U^*$. Assume that the homomorphism $$p:\Sym_{n-1}U\to D_{n-1}U^*,$$ which is given by $$p(\mu_{n-1})= \mu_{n-1}(x(\Phi_{2n-1})),\quad\text{for 
$\mu_{n-1}\in \Sym_{n-1}U$},
$$ is an isomorphism.
Let $\rho_{0,2n-1}$ be the element of $D_{2n-1}U_0^*$ with 
$$\mu_{0,2n-1}(\Phi_{2n-1}-\rho_{0,2n-1})=0,\quad\text{for all $\mu_{0,2n-1}\in \Sym_{2n-1}U_0$.}$$  Let 
$$\Phitilde_{2n-1}=\Phi_{2n-1}-\rho_{0,2n-1}\in D_{2n-1}U^*.$$
Let $I$  
be the following ideal
 of $R$: 
$$I=\{\mu\in R\mid \mu(x(\Phi_{2n-1}))=0\}
.$$\end{data}

\begin{remarks} 
\begin{enumerate}[\rm(a)]
\item The hypothesis that the homomorphism $p$ of Data~\ref{3.1} is an isomorphism is equivalent to the hypothesis that $I$ is linearly presented; see, Proposition~\ref{1.8}.
\item The $\widetilde{\B}$ of Theorem~\ref{3.2} may be found 
as \cite[Def.~2.7]{EKK2}. Other versions of $\widetilde{\B}$ may also be found as \cite[Observation~4.4]{EKK2},
or \cite[Proposition~5.5]{EKK2}, or \cite[Definition~3.1]{EKK3}. 
\end{enumerate}
\end{remarks}

\begin{theorem} 
\cite[Thm.~4.1]{EKK2}
\label{3.2}
Adopt Data~{\rm\ref{3.1}}. One minimal homogeneous resolution of $
R/I$ by free $R$-modules is
\begin{equation}\notag \widetilde{\B}:\quad 0\to B_3\xrightarrow{\widetilde{b}_3} B_2\xrightarrow{\widetilde{b}_2} B_1\xrightarrow{\widetilde{b}_1} B_0\end{equation} with
$B_3=B_0=R$,  
$$B_2=\left\{\begin{matrix} R\t \Sym_{n-1}U_0\\\p\\R\t D_nU_0^*\end{matrix}\right.\quad \text{and}\quad B_1=\left\{\begin{matrix} R\t D_{n-1}U_0^*\\\p\\R\t \Sym_nU_0,\end{matrix}\right.$$
\begingroup\allowdisplaybreaks
\begin{align*}
\widetilde{b}_1(\nu_{0,n-1})={}&xp^{-1}(\nu_{0,n-1}),\\ 
\widetilde{b}_1(\mu_{0,n})={}&\mu_{0,n}-xp^{-1}(\mu_{0,n}(\Phitilde_{2n-1})),\\ 
\widetilde{b}_2\bmatrix \mu_{0,n-1}\\0\endbmatrix={}&\begin{cases} 
+x \sum\limits_{m_1\in \binom{y,z}{n-1}} \Big(p^{-1}\Big[(z\mu_{0,n-1})(\Phitilde_{2n-1}) \Big]\Big)\Big[(ym_1)(\Phitilde_{2n-1})\Big]\t m_1^*\vspace{5pt}\\
-x \sum\limits_{m_1\in \binom{y,z}{n-1}}\Big(p^{-1}\Big[(y\mu_{0,n-1})(\Phitilde_{2n-1}) \Big]\Big)\Big[(zm_1)(\Phitilde_{2n-1})\Big]\t m_1^*\\\hline
+x \sum\limits_{m_2\in \binom{y,z}{n}} \Big(p^{-1}\Big[(z\mu_{0,n-1})(\Phitilde_{2n-1}) \Big]\Big)(y(m_2^*))\t m_2\\
-x \sum\limits_{m_2\in \binom{y,z}{n}}\Big(p^{-1}\Big[(y\mu_{0,n-1})(\Phitilde_{2n-1}) \Big]\Big)(z(m_2^*))\t m_2
\\
+y\t z\mu_{0,n-1}
-z\t y\mu_{0,n-1},
\end{cases}\\
\widetilde{b}_2\bmatrix 0\\\nu_{0,n}\endbmatrix ={}& \begin{cases}
-x \sum\limits_{m_1\in \binom{y,z}{n-1}}\Big(p^{-1}\Big[(zm_1)(\Phitilde_{2n-1}) \Big]\Big)(y(\nu_{0,n}))\t m_1^*\\
+x \sum\limits_{m_1\in \binom{y,z}{n-1}} \Big(p^{-1}\Big[(ym_1)(\Phitilde_{2n-1}) \Big]\Big)(z(\nu_{0,n}))\t m_1^*\\
-y\t z(\nu_{0,n})
+z\t y(\nu_{0,n})\\
 \hline
+x \sum\limits_{m_2\in \binom{y,z}{n}}
[p^{-1}(z(\nu_{0,n}))][y(m_2^*)]\t
 m_2\\
-x \sum\limits_{m_2\in \binom{y,z}{n}}
[p^{-1}(y(\nu_{0,n}))][z(m_2^*)]
\t m_2,
\end{cases}\\
\intertext{and}
\widetilde{b}_3(1)={}&\bmatrix \sum\limits_{m_1\in \binom{y,z}{n-1}}\widetilde{b}_1(m_1^*)\t m_1\\
\sum\limits_{m_2\in \binom{y,z}{n}}\widetilde{b}_1(m_2)\t m_2^*\endbmatrix,
\end{align*}\endgroup
for $\mu_{0,i}\in \Sym_i U_0$ and $\nu_{0,i}\in D_i U_0^*$.
\end{theorem}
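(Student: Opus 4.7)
The plan is to derive this theorem from \cite[Thm.~4.1]{EKK2} by implementing the change of perspective laid out in the paragraphs preceding Data~\ref{3.1}. In \cite{EKK2}, the Macaulay coefficients are treated as formal variables $t_m$ in the polynomial ring $\mathfrak R = \Z[x,y,z,\{t_m\}]$, the analogue of $p$ is a map of free $\mathfrak R$-modules, and its inverse only exists after inverting the polynomial $\pdelta$ (essentially $\det p$). Here we work over the field $\kk$, where by hypothesis $p$ is already an isomorphism of finite-dimensional vector spaces, so the literal inverse $p^{-1}$ makes sense. The task is to show that the complex $\widetilde{\B}$ displayed above is the specialization to $\kk[x,y,z]$ of the generic complex constructed in \cite[Def.~2.7]{EKK2}.

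The key step is a term-by-term comparison of the differentials. In \cite{EKK2}, each differential is written in terms of the classical adjoint $q$ of $p$, usually appearing as $q/\pdelta$; replacing every occurrence of $\pdelta^{-1} q$ by $p^{-1}$ yields precisely the formulas displayed for $\widetilde{b}_1$, $\widetilde{b}_2$, $\widetilde{b}_3$, with no other substitutions required. Once the differentials are seen to match, the assertions that $\widetilde{\B}$ is a complex, that it is acyclic, that it is minimal and homogeneous with the claimed Betti numbers, and that $H_0(\widetilde{\B}) = R/I$ all transfer directly from \cite[Thm.~4.1]{EKK2}, since homogeneity and minimality are preserved under specialization at any point where $\pdelta$ is a unit.

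The main obstacle is careful bookkeeping rather than a substantive mathematical difficulty. Sign conventions, the identifications between $\Sym_i U_0$ and $D_i U_0^*$ induced by the dual bases of \ref{order} and \ref{x^*}, and the antisymmetry of the pairs of terms in $\widetilde{b}_2$ that interchange $y \leftrightarrow z$ all need to line up precisely with the corresponding choices in \cite{EKK2}. A single sign error would propagate and destroy either the alternating structure of $\widetilde{b}_2$ or the Pfaffian identity implicit in $\widetilde{b}_3$. I would therefore carry out the comparison systematically on each of the four block components of $\widetilde{b}_2$ and each summand of $\widetilde{b}_1$, and only then invoke \cite{EKK2} to conclude that the resulting complex is a minimal homogeneous resolution of $R/I$.
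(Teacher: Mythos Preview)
Your proposal is correct and matches the paper's approach: Theorem~\ref{3.2} is not proved in this paper but is simply recorded from \cite[Thm.~4.1]{EKK2} after the two changes of viewpoint described in the paragraphs preceding Data~\ref{3.1} (working over $\kk[x,y,z]$ rather than the generic ring $\mathfrak R$, and writing $p^{-1}$ in place of $\pdelta^{-1}q$). Your plan to carry out the term-by-term comparison of differentials is in fact more explicit than what the paper does, which simply asserts that the translation yields the displayed complex.
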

The graded Betti numbers of $\widetilde{\B}$ from Theorem \ref{3.2}, and also the graded Betti numbers of $\B$ from Theorem \ref{3.3}, are given in (\ref{1.0.2}).
\begin{theorem} 
\label{3.3} Adopt Data~{\rm\ref{3.1}}. A second minimal homogeneous resolution of $R/I$ by free $R$-modules is
\begin{equation}\label{6.5.1} \B:\quad 0\to B_3\xrightarrow{b_3} B_2\xrightarrow{b_2} B_1\xrightarrow{b_1} B_0,\end{equation} 
with $B_i$ as given in Theorem~{\rm\ref{3.2}},
\begingroup\allowdisplaybreaks
\begin{align*}
b_{1}(\nu_{0,n-1})={}&xp^{-1}(\nu_{0,n-1}), 
\\
b_{1}(\mu_{0,n})={}&\mu_{0,n}-xp^{-1}(\mu_{0,n}(\Phi_{2n-1})), \\
b_2\bmatrix \mu_{0,n-1}\\0\endbmatrix ={}&\begin{cases}
+x \sum\limits_{m_1\in \binom{y,z}{n-1}}
\Big(p^{-1}\Big[(z\mu_{0,n-1})(\Phi_{2n-1}) \Big]\Big)\Big[(ym_1)
(\Phi_{2n-1})\Big]\t m_1^*
\\
-x \sum\limits_{m_1\in \binom{y,z}{n-1}}
\Big(p^{-1}\Big[(y\mu_{0,n-1})(\Phi_{2n-1}) \Big]\Big)\Big[(zm_1)(\Phi_{2n-1})\Big]\t m_1^*
\\
\hline
+x \sum\limits_{m_2\in \binom{y,z}{n}}
\Big(p^{-1}\Big[(z\mu_{0,n-1})(\Phi_{2n-1}) \Big]\Big)(y(m_2^*))
\t m_2
\\
-x
\sum\limits_{m_2\in \binom{y,z}{n}}
 \Big(p^{-1}\Big[(y\mu_{0,n-1})(\Phi_{2n-1}) \Big]\Big)(z(m_2^*))
\t m_2 
\\
+y\t z\mu_{0,n-1} 
-z\t y\mu_{0,n-1}, 
\\
\end{cases}\\
b_2\bmatrix 0\\\nu_{0,n}\endbmatrix ={}& \begin{cases}
-x \sum\limits_{m_1\in \binom{y,z}{n-1}}\Big(p^{-1}\Big[(zm_1)(\Phi_{2n-1}) \Big]\Big)(y(\nu_{0,n}))\t m_1^*\\
+x \sum\limits_{m_1\in \binom{y,z}{n-1}} \Big(p^{-1}\Big[(ym_1)(\Phi_{2n-1}) \Big]\Big)(z(\nu_{0,n}))\t m_1^*\\
-y\t z(\nu_{0,n})
+z\t y(\nu_{0,n})\\
 \hline
+x \sum\limits_{m_2\in \binom{y,z}{n}}
[p^{-1}(z(\nu_{0,n}))][y(m_2^*)]
\t m_2\\
-x \sum\limits_{m_2\in \binom{y,z}{n}}
[p^{-1}(y(\nu_{0,n}))][z(m_2^*)]
\t m_2,
\end{cases}\\
\intertext{and}
b_3(1)={}&\bmatrix \sum\limits_{m_1\in \binom{y,z}{n-1}}b_1(m_1^*)\t m_1\\
\sum\limits_{m_2\in \binom{y,z}{n}}b_1(m_2)\t m_2^*\endbmatrix,
\end{align*}\endgroup
for $\mu_{0,i}\in \Sym_iU_0$ and $\nu_{0,i}\in D_iU_0^*$.
\end{theorem}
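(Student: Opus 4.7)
The plan is to exhibit an explicit graded $R$-linear chain isomorphism $\phi\colon\B\to\widetilde{\B}$ that is the identity on $B_0=B_3=R$; combined with Theorem~\ref{3.2}, this immediately gives that $\B$ is a minimal free resolution of $R/I$. The starting observation is that $\rho_{0,2n-1}\in D_{2n-1}U_0^*$ is annihilated by $x$: for every $\mu\in\Sym_{2n-2}U$, the element $x\mu$ is divisible by $x$ and hence does not lie in $\Sym_{2n-2}U_0$, so $(x\rho_{0,2n-1})(\mu)=\rho_{0,2n-1}(x\mu)=0$. Therefore $x(\Phi_{2n-1})=x(\widetilde{\Phi}_{2n-1})$; consequently the map $p$ and the ideal $I$ of Data~\ref{3.1} are unchanged by the switch from $\widetilde{\Phi}_{2n-1}$ to $\Phi_{2n-1}$, and $b_1(\nu_{0,n-1})=\widetilde{b}_1(\nu_{0,n-1})$ on the $R\otimes D_{n-1}U_0^*$ summand of $B_1$.

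I would then define
$$\phi_1\bmatrix\nu_{0,n-1}\\\mu_{0,n}\endbmatrix=\bmatrix\nu_{0,n-1}-\mu_{0,n}(\rho_{0,2n-1})\\\mu_{0,n}\endbmatrix,\qquad\phi_2\bmatrix\mu_{0,n-1}\\\nu_{0,n}\endbmatrix=\bmatrix\mu_{0,n-1}\\\mu_{0,n-1}(\rho_{0,2n-1})+\nu_{0,n}\endbmatrix,$$
noting that $\mu_{0,n}(\rho_{0,2n-1})\in D_{n-1}U_0^*$ and $\mu_{0,n-1}(\rho_{0,2n-1})\in D_nU_0^*$, so the target summands are correct. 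Each $\phi_i$ is block-triangular with identity diagonal blocks, hence is an $R$-linear automorphism of the relevant graded free module. The identity $\widetilde{b}_1\circ\phi_1=b_1$ is immediate from $\Phi_{2n-1}=\widetilde{\Phi}_{2n-1}+\rho_{0,2n-1}$ and the $\kk$-linearity of $p^{-1}$. The identity $\phi_2\circ b_3=\widetilde{b}_3$ reduces, after expanding $m_1(\rho_{0,2n-1})\in D_nU_0^*$ in the dual basis $\{m_2^*\}$, to the assertion that the coefficient $\rho_{0,2n-1}(m_1m_2)$ equals $\Phi_{2n-1}(m_1m_2)$; this is the defining property of $\rho_{0,2n-1}$, since $m_1m_2\in\Sym_{2n-1}U_0$.

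The main obstacle is the middle identity $\widetilde{b}_2\circ\phi_2=\phi_1\circ b_2$. Substituting $\Phi_{2n-1}=\widetilde{\Phi}_{2n-1}+\rho_{0,2n-1}$ in the formulas for $b_2\bmatrix\mu_{0,n-1}\\0\endbmatrix$ and $b_2\bmatrix0\\\nu_{0,n}\endbmatrix$ produces correction terms of two shapes: those in which $\rho_{0,2n-1}$ appears inside the argument of $p^{-1}$, and those in which it appears in a trailing contraction factor such as $(ym_1)(\Phi_{2n-1})$. I expect the first shape to be absorbed exactly by the off-diagonal of $\phi_2$, which injects $\mu_{0,n-1}(\rho_{0,2n-1})$ into the lower summand of $B_2$ and interacts with the $y\t z-z\t y$ part of $\widetilde{b}_2$; and the second shape to be absorbed exactly by the off-diagonal of $\phi_1$, which subtracts $v(\rho_{0,2n-1})$ from the top summand of $B_1$. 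The matching in every case uses only the defining identity for $\rho_{0,2n-1}$ and the $\kk$-linearity of $p^{-1}$. The computation is mechanical but lengthy, requiring careful bookkeeping across the four block pieces of each of the two displays for $b_2$; this is where the real work of the proof lies, but no new ideas beyond the above are needed.
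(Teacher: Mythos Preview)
Your proposal is correct and follows exactly the same approach as the paper: the paper's proof simply writes down the chain isomorphism $\theta_1,\theta_2$ (identical to your $\phi_1,\phi_2$) and asserts that the resulting diagram commutes, without spelling out the verifications you outline. You have supplied the details the paper omits, but the strategy is the same.
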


\begin{proof} It is not difficult to see that
$$\xymatrix{
0\ar[r]&B_3\ar[r]^{b_3}\ar[d]^{=}&B_2\ar[r]^{b_2}\ar[d]^{\theta_2}&B_1\ar[r]^{b_1}\ar[d]^{\theta_1}&B_0\ar[d]^{=}\\
0\ar[r]&B_3\ar[r]^{\widetilde{b}_3}&B_2\ar[r]^{\widetilde{b}_2}&B_1\ar[r]^{\widetilde{b}_1}&B_0,
}$$
with \begingroup\allowdisplaybreaks\begin{align*}\theta_1\left(\bmatrix \nu_{0,n-1}\\\mu_{0,n}\endbmatrix\right)={}&\bmatrix \nu_{0,n-1}-\mu_{0,n}(\rho_{0,2n-1})\\\mu_{0,n}\endbmatrix\quad\text{and}\\
\theta_2\left(\bmatrix \mu_{0,n-1}\\\nu_{0,n}\endbmatrix\right)={}&
\bmatrix \mu_{0,n-1}\\\mu_{0,n-1}(\rho_{0,2n-1})+\nu_{0,n}\endbmatrix,\end{align*}\endgroup
for $\mu_{0,i}\in \Sym_iU_0$ and $\nu_{0,i}\in D_iU_0^*$,
is an isomorphism of complexes.
\end{proof}

\begin{chunk}\label{proof}{\it The proof of Theorem~{\rm\ref{NewTheorem}}.}
The map $b_2$ of Theorem~\ref{3.3} is $$b_2\bmatrix \mu_{0,n-1}\\\nu_{0,n}\endbmatrix =
\bmatrix x\t f_1(\mu_{0,n-1})+x\t f_2(\nu_{0,n})+f_3(\nu_{0,n})\\
x\t f_4(\mu_{0,n-1})+f_5(\mu_{0,n-1})+x\t f_6(\nu_{0,n})\endbmatrix,$$
where
\begin{align*}
f_1:&\Sym_{n-1}U_0\to D_{n-1}U_0^*,&f_2:&D_nU_0^*\to D_{n-1}U_0^*,\\
f_3:&D_nU_0^*\to R\t_\kk D_{n-1}U_0^*&f_4:&\Sym_{n-1}U_0\to \Sym_nU_0\\
f_5:&\Sym_{n-1}U_0\to R\t_\kk \Sym_nU_0,\text{ and}&f_6:&D_nU_0^*\to \Sym_nU_0,\\
\end{align*}are given by
\begingroup\allowdisplaybreaks
\begin{align*}
f_1(\mu_{0,n-1}) ={}&\begin{cases}
\phantom{+}\sum\limits_{m_1\in \binom{y,z}{n-1}}
\Big(p^{-1}\Big[(z\mu_{0,n-1})(\Phi_{2n-1}) \Big]\Big)\Big[(ym_1)
(\Phi_{2n-1})\Big]\cdot m_1^*
\\
- \sum\limits_{m_1\in \binom{y,z}{n-1}}
\Big(p^{-1}\Big[(y\mu_{0,n-1})(\Phi_{2n-1}) \Big]\Big)\Big[(zm_1)(\Phi_{2n-1})\Big]\cdot m_1^*,
\end{cases}
\\
f_2(\nu_{0,n})={}&
\begin{cases}
\phantom{+} \sum\limits_{m_1\in \binom{y,z}{n-1}} \Big(p^{-1}\Big[(ym_1)(\Phi_{2n-1}) \Big]\Big)(z(\nu_{0,n}))\cdot m_1^*\\
- \sum\limits_{m_1\in \binom{y,z}{n-1}}\Big(p^{-1}\Big[(zm_1)(\Phi_{2n-1}) \Big]\Big)(y(\nu_{0,n}))\cdot m_1^*
,\end{cases}
\\
f_3(\nu_{0,n})={}&-y\t z(\nu_{0,n})+z\t y(\nu_{0,n}),
\\
f_4(\mu_{0,n-1}) ={}&\begin{cases}
\phantom{+}\sum\limits_{m_2\in \binom{y,z}{n}}
\Big(p^{-1}\Big[(z\mu_{0,n-1})(\Phi_{2n-1}) \Big]\Big)(y(m_2^*))
\cdot m_2
\\
-
\sum\limits_{m_2\in \binom{y,z}{n}}
 \Big(p^{-1}\Big[(y\mu_{0,n-1})(\Phi_{2n-1}) \Big]\Big)(z(m_2^*))
\cdot m_2, 
\end{cases}\\
f_5(\mu_{0,n-1})={}&y\t z\mu_{0,n-1} -z\t y\mu_{0,n-1},\text{ and} \\
f_6(\nu_{0,n})={}&\begin{cases}
\phantom{+}\sum\limits_{m_2\in \binom{y,z}{n}}
[p^{-1}(z(\nu_{0,n}))][y(m_2^*)]
\cdot m_2\\
- \sum\limits_{m_2\in \binom{y,z}{n}}
[p^{-1}(y(\nu_{0,n}))][z(m_2^*)]
\cdot m_2.
\end{cases}\end{align*}
\endgroup
The coefficients in $f_1$, $f_2$, $f_4$, and $f_6$ all are elements of $\kk$; so we have written 
$$\text{coefficient}\cdot \text{basis vector}.$$On the other hand, the coefficients of $f_3$ and $f_5$ are homogeneous of degree one in $R$; so we continue to write $$\text{coefficient}\t \text{basis vector}.$$
We complete the proof of Theorem~\ref{NewTheorem} by showing  
that the matrices of $f_1,\dots,f_6$, with respect to the ordered bases of \ref{order} are
$$\begin{array}{|c|c||c|c|}\hline
\text{map}&\text{matrix}&\text{map}&\text{matrix}\\\hline
f_1&A'&
f_2&B_1\fakeht\\\hline
f_3&B_2&
f_4&-B_1\transpose\fakeht\\\hline
f_5&-B_2\transpose&
f_6&D_0-D_0\transpose\fakeht\\\hline\end{array}\,.$$

The matrices $\bp$ and $\br$ are defined in the statement of Theorem~\ref{NewTheorem}. Notice that $\bp$ is the matrix for the map $p:\Sym_{n-1}U\to D_{n-1}U^*$ of Data~\ref{3.1} in the bases of \ref{order}. Let $$r:\Sym_nU_0\to D_{n-1}U^*$$ be the map defined by $$r(\mu_{0,n})=\mu_{0,n}(\Phi_{2n-1}),$$ for $\mu_{0,n}\in  \Sym_nU_0$. Observe that $\br$ is the matrix for $r$ in the bases of \ref{order}.

The module $\Sym_nU$ is equal to the direct sum $$x\Sym_{n-1}U\p \Sym_nU_0.$$ 
The choice of basis for $\Sym_nU$, as described in (\ref{favorite}), leads to the observation that  the matrix for $\Sym_nU\to D_{n-1}U^*$, given by
\begin{equation}\mu_n\mapsto \mu_n(\Phi_{2n-1}),\label{pre-pr}\end{equation} for $\mu_n\in \Sym_nU$, is 
\begin{equation}\label{pr}\left[\begin{array}{c|c}
\bp&\br\end{array}\right].\end{equation}
Furthermore, the dual of (\ref{pre-pr}) is the map \begin{equation}\label{**} \Sym_{n-1}U\to D_nU^*,\end{equation}which is given by $$\mu_{n-1}\mapsto \mu_{n-1}(\Phi_{2n-1}).$$ The matrix for (\ref{**}) is
$$\left[\begin{array}{c|c}\bp&\br\end{array}\right]\transpose=\bmatrix \fakeht \bp\transpose\\\hline\fakeht  \br\transpose
\endbmatrix.$$

\bs
Let $f_{11}$ and $f_{12}$ be the homomorphisms 
$$\Sym_{n-1}U_0\to D_{n-1}U_0^*$$
given by 
\begin{align*}f_{11}(\mu_{0,n-1})={}&\sum\limits_{m_1\in \binom{y,z}{n-1}}
\Big(p^{-1}\Big[(z\mu_{0,n-1})(\Phi_{2n-1}) \Big]\Big)\Big[(ym_1)
(\Phi_{2n-1})\Big]\cdot m_1^*\text{ and}\\
f_{12}(\mu_{0,n-1})={}&\sum\limits_{m_1\in \binom{y,z}{n-1}}
\Big(p^{-1}\Big[(y\mu_{0,n-1})(\Phi_{2n-1}) \Big]\Big)\Big[(zm_1)
(\Phi_{2n-1})\Big]\cdot m_1^*.\end{align*}
We first prove that $A_0$ is the matrix for $f_{11}$.
 Observe that
\begin{equation}\label{promise}f_{11}(\mu_{0,n-1})=y \Big(\proj_{D_nU_0^*} \big[\big(p^{-1} [(z\mu_{0,n-1})(\Phi_{2n-1})]\big)(\Phi_{2n-1})\big]\Big).\end{equation}(See Lemma~\ref{details} for details.) Thus, $f_{11}$ is the composition of the following six maps: 
\begin{align*}
\Sym_{n-1}U_0&\xrightarrow{\map_1}\Sym_nU_0
\xrightarrow{\map_2}D_{n-1}U^*
\xrightarrow{\map_3}\Sym_{n-1}U
\xrightarrow{\map_4}D_nU^*
\xrightarrow{\map_5}D_nU_0^*\\
&\xrightarrow{\map_6}D_{n-1}U_0^*, \end{align*}
where   \begin{align}
&\map_1(\mu_{0,n-1})=z\mu_{0,n-1},&&\map_2=r,&&\map_3=p^{-1},\notag\\
&\map_4(\mu_{n-1})=\mu_{n-1}(\Phi_{2n-1}),&&\map_5((x\mu_{n-1})^*)=0,\label{6543}\\ &\map_5((\mu_{0,n})^*)=(\mu_{0,n})^*,\text{ and}
&&\map_6(\nu_{0,n})=y(\nu_{0,n}),\notag\end{align}
for $$\mu_{0,i}\in \Sym_iU_0,\quad  \mu_{i}\in \Sym_iU, 
 \quad \text{and}\quad \nu_{0,i}\in D_iU_0^*.$$
The matrix for $\map_1$ in the bases of \ref{order} is
$$\begin{array}{|c||c|c|c|c|c|}\hline
&y^{n-1}&y^{n-2}z&\cdots&yz^{n-2}&z^{n-1}\\\hline\hline
y^n       &0&0&\cdots&0&0\\\hline
y^{n-1}z  &1&0&\cdots&0&0\\\hline
y^{n-2}z^2&0&1&\cdots&0&0\\\hline
\vdots    &\vdots&\ddots&\ddots&\ddots&\vdots\\\hline
yz^{n-1}  &0&0&\cdots&1&0\\\hline
z^n       &0&0&\cdots&0&1\\\hline
\end{array}\, ;$$the matrix for $\map_2$ is $\br$; and the matrix for $\map_3$ is $\bp^{-1}$. Observe that  
$\map_4$ is (\ref{**}); hence, the matrix for $\map_4$ 
is 
$$\bmatrix \bp\transpose\fakeht\\\hline \fakeht\br\transpose\endbmatrix.$$Use the convention 
of  \ref{order}
to see that the matrix for $\map_5$ is  $$\left[\begin{array}{c|c}
0_{(n+1)\times \binom{n+1}2}& I_{n+1} \\\end{array}\right]; $$and the matrix for $\map_6$ is 
$$\begin{array}{|c||c|c|c|c|c|c|}\hline
&(y^{n})^*&(y^{n-1}z)^*&\cdots&(y^2z^{n-2})^*&(yz^{n-1})^*&(z^n)^*\\\hline\hline
(y^{n-1})^*       &1&0&\cdots&0&0&0\\\hline
(y^{n-2}z)^*  &0&1&\ddots&0&0&0\\\hline
\vdots    &\vdots&\ddots&\ddots&\ddots&\vdots&\vdots\\\hline
(yz^{n-2})^*&0&0&\ddots&1&0&0\\\hline
(z^{n-1})^*  &0&0&\cdots&0&1&0\\\hline
\end{array}.$$
The matrix for $\map_5\circ \map_4\circ\map_3\circ\map_2$ is easily seen to be $\br\transpose\bp^{-1}\br$. The matrix for $\map_1$ deletes column $1$ and the matrix for $\map_6$ deletes row $n+1$. 
It follows that $A_0$ is the matrix for $f_{11}$.

In a similar manner the matrix for $f_{12}$ is 
$$A_0'=[\br\transpose \bp^{-1}\br]
\text{ with row $1$ and
column $n+1$ deleted}.$$ The matrix $\bp$ is symmetric; hence, the matrix $[\br\transpose \bp^{-1}\br]$ is symmetric and $A_0'$ is equal to the transpose of $A_0$. We conclude that the matrix for $f_1$, which is equal to $f_{11}-f_{12}$, is $A_0-A_0\transpose=A'$.

\bs

Let $f_{21}$ and $f_{22}$ be the homomorphisms 
$$D_nU_0^*\to D_{n-1}U_0^*$$ given by
\begin{align*}f_{21}(\nu_{0,n})={}& \sum\limits_{m_1\in \binom{y,z}{n-1}} \Big(p^{-1}\Big[(ym_1)(\Phi_{2n-1}) \Big]\Big)(z(\nu_{0,n}))\cdot m_1^*\text{ 
and} \\
f_{22}(\nu_{0,n})={}&\sum\limits_{m_1\in \binom{y,z}{n-1}}\Big(p^{-1}\Big[(zm_1)(\Phi_{2n-1}) \Big]\Big)(y(\nu_{0,n}))\cdot m_1^*.\end{align*}
Use the techniques of Lemma~\ref{details} to see that 
$$f_{21}(\nu_{0,n})=y \big[ \proj_{D_nU_0^*}
 \big([p^{-1}(z(\nu_{0,n}))](\Phi_{2n-1})\big)\big].
$$
Thus, the map $f_{21}$ is the composition of the following six maps:
\begin{align*}D_nU_0^*&\xrightarrow{\map_7}D_{n-1}U_0^*\xrightarrow{\map_8}D_{n-1}U^*\xrightarrow{\map_3}\Sym_{n-1}U\xrightarrow{\map_4}D_nU^*\xrightarrow{\map_5}D_nU_0^*\\&\xrightarrow{\map_6}D_{n-1}U_0^*,\end{align*}where 
$\map_7(\nu_{n,0})=z(\nu_{n,0})$, $\map_8$ is inclusion, 
and $\map_3$, $\map_4$, $\map_5$, and $\map_6$ are given defined in (\ref{6543}).  
The matrix for $\map_7$ in the bases of \ref{order} is
\begin{align}&\begin{array}{|c|c|c|c|c|c|c|}\hline
&(y^n)^*&(y^{n-1}z)^*&(y^{n-2}z)^*&\dots&(yz^{n-1})^*&(z^n)^*\\\hline
(y^{n-1})^*&0&1&0&0&\cdots&0\\\hline
(y^{n-2}z)^*&0&0&1&0&\cdots&0\\\hline
\vdots&\vdots&\vdots&0&\ddots&\ddots&\vdots\\\hline
(yz^{n-2})^*&0&\vdots&\vdots&\ddots&1&0\\\hline
(z^{n-1})^*&0&0&0&\cdots&0&1\\\hline\end{array}\notag\\
\label{map_7}={}&\left[\begin{array}{c|c} 0_{n\times 1}&I_n\end{array}\right];\end{align}
the matrix for $\map_8$ is
\begin{equation}\label{map_8}\bmatrix 0_{\binom n2\times n}\\\hline I_{n}\endbmatrix;\end{equation}
the matrix for $\map_6\circ \map_5\circ \map_4\circ \map_3$ continues to be
$$\left[\begin{array}{c|c} I_n&0_{n\times 1}\end{array}\right] \br\transpose\bp^{-1}.$$ Observe that 
$$\br\transpose\bp^{-1}\bmatrix 0_{\binom n2\times n}\\\hline I_{n}\endbmatrix
=[ \text{the right most $n$ columns of $\br\transpose\bp^{-1}$}]=B_0.$$
It follows that  the matrix for $f_{21}$
is $$\left[\begin{array}{c|c} I_n&0_{n\times 1}\end{array}\right]B_0
\left[\begin{array}{c|c} 0_{n\times 1}&I_n\end{array}\right]=
\left[\begin{array}{c|c} 0_{n\times 1}& B_0\text{ with row $n+1$ deleted}\end{array}\right].$$
The map $f_{22}$ is obtained from the map $f_{21}$ by exchanging the role of $y$ and $z$; hence the matrix for $f_{22}$ is
$$
\left[\begin{array}{c|c} 0_{n\times 1}&I_n\end{array}\right]B_0
\left[\begin{array}{c|c} I_n&0_{n\times 1}\end{array}\right]
=
\left[\begin{array}{c|c} B_0\text{ with row $1$ deleted}
& 0_{n\times 1}
\end{array}\right].$$
It follows that the matrix for $f_2=f_{21}-f_{22}$ is
$$\left[\begin{array}{c|c} 0_{n\times 1}& B_0\text{ with row $n+1$ deleted}\end{array}\right]-
\left[\begin{array}{c|c} B_0\text{ with row $1$ deleted}
& 0_{n\times 1}
\end{array}\right]=B_1.$$

\bs 
We have already seen that the matrix for $\map_7(\nu_{0,n})=z(\nu_{0,n})$ is
$\left[\begin{array}{c|c} 0_{n\times 1}&I_n\end{array}\right]$ and the matrix for
$\map_6(\nu_{0,n})=y(\nu_{0,n})$ is $\left[\begin{array}{c|c}I_n&0_{n\times 1}\end{array}\right]$. It follows that the matrix for $f_3$ is  
$$\left[\begin{array}{c|c}zI_n&0_{n\times 1}\end{array}\right]
-\left[\begin{array}{c|c} 0_{n\times 1}&yI_n\end{array}\right]=B_2.$$

\bs The maps $f_2$ and $f_4$ satisfy
$$[f_2(\nu_{0,n})](\mu_{0,n-1})+\nu_{0,n}[f_4(\mu_{0,n-1})]=0;$$ hence the matrix for $f_4$ is minus the transpose of the matrix for $f_2$. Similarly,
$$[f_3(\nu_{0,n})](\mu_{0,n-1})+\nu_{0,n}[f_5(\mu_{0,n-1})]=0;$$ hence the matrix for $f_5$ is minus the transpose of the matrix for $f_3$.

\bs
Let $f_{61}$ and $f_{62}$ be the homomorphisms 
$$D_nU_0^*\to \Sym_nU_0,$$ which are given by
\begin{align*}
f_{61}(\nu_{0,n})= {}&\sum\limits_{m_2\in \binom{y,z}{n}}
[p^{-1}(z(\nu_{0,n}))][y(m_2^*)]
\cdot m_2\text{ and}\\
f_{62}(\nu_{0,n})={}&
\sum\limits_{m_2\in \binom{y,z}{n}}
[p^{-1}(y(\nu_{0,n}))][z(m_2^*)]
\cdot m_2.\end{align*} 
Use the techniques of Lemma~\ref{details} to see that  $$f_{61}(\nu_{0,n})=y\cdot \proj_{\Sym_{n-1}U_0}[p^{-1}(z(\nu_{0,n}))].$$ 
The map $f_{61}$  
is the composition
$$D_n(U_0^*)\xrightarrow{\map_7}D_{n-1}U_0^*\xrightarrow{\map_3\circ\map_8}\Sym_{n-1}U\xrightarrow{\map_9}\Sym_{n-1}U_0\xrightarrow{\map_{10}}\Sym_nU_0^,$$
where  
$\map_9$ is projection, and $\map_{10}$ is multiplication in $\Sym_\bullet U_0$ by $y$.
The matrix for $\map_7$ is given in (\ref{map_7}); the matrix for $\map_3\circ \map_8$ is the product of $\bp^{-1}$ and (\ref{map_8});
the matrix for $\map_9$ is 
$$\left[\begin{array}{c|c} 0_{n\times \binom n2}&I_n\end{array}\right];$$ and the matrix for $\map_{10}$ is $$\bmatrix I_n\\\hline 0_{1\times n}\endbmatrix.$$
It follows that the matrix for $f_{61}$ is
the product
\begingroup\allowdisplaybreaks
\begin{align*}
&\bmatrix I_n\\\hline 0_{1\times n}\endbmatrix \left[ \begin{array}{c|c} 0_{n\times \binom n2}&I_n\end{array}\right]\bmatrix\text{The right most $n$ columns}\\\text{of $\bp^{-1}$}.\endbmatrix \left[ \begin{array}{c|c} 0_{n\times 1}&I_n\end{array}\right]\\
{}={}&\bmatrix I_n\\\hline 0_{1\times n}\endbmatrix 
\bmatrix \text{the bottom right $n\times n$ submatrix of $\bp^{-1}$}\endbmatrix  
\left[ \begin{array}{c|c} 0_{n\times 1}&I_n\end{array}\right]\\
{}={}&\left[ \begin{array}{c|c} 0_{n\times 1}&\text{the bottom right $n\times n$ submatrix of $\bp^{-1}$}\\\hline
0_{1\times 1}&0_{1\times n}\end{array}
\right]=D_0.\end{align*}\endgroup
The matrix for $f_{62}$ is obtained from the matrix for $f_{61}$ by exchanging the role of 
$y$ and $z$. In other words, the matrix for $f_{62}$ is  
\begin{align}\notag&\left[ \begin{array}{c}0_{1\times n}\\\hline I_n\end{array}\right]
\bmatrix \text{the bottom right $n\times n$ submatrix of $\bp^{-1}$}\endbmatrix
\left[ \begin{array}{c|c}I_n&0_{n\times 1}\end{array}\right]\\
\label{6.11.8}{}={}&\left[ \begin{array}{c|c} 0_{1\times n}&0_{1\times 1}\\\hline
\text{the bottom right $n\times n$ submatrix of $\bp^{-1}$}&0_{n\times 1}
\end{array}\right].\end{align}
The matrix $\bp$ is symmetric; so (\ref{6.11.8}) is the transpose of the matrix for $f_{61}$ and the matrix for $f_6=f_{61}-f_{62}$ is $D_0-D_0\transpose$.
\hfill \qed
\end{chunk}
At  (\ref{promise}) we promised to exhibit the following calculation.  
\begin{lemma}
\label{details} The elements 
$$\sum\limits_{m_1\in \binom{y,z}{n-1}}
(p^{-1}[(z\mu_{0,n-1})(\Phi_{2n-1}) ])[(ym_1)
(\Phi_{2n-1})]\cdot m_1^*$$ and 
$$y \big(\proj_{D_nU_0^*} \big[\big(p^{-1} [(z\mu_{0,n-1})(\Phi_{2n-1})]\big)(\Phi_{2n-1})\big]\big)$$ of $D_{n-1}U_0^*$ are equal.
\end{lemma}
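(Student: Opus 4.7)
The plan is to expand both expressions in the basis $\{m_1^* : m_1 \in \binom{y,z}{n-1}\}$ of $D_{n-1}U_0^*$ and check that the coefficients match. The key structural fact driving the comparison is the associativity of the $\Sym_\bullet U$-action on $D_\bullet U^*$ recorded in \ref{x^*}: for $\mu_i \in \Sym_i U$, $\mu_j \in \Sym_j U$, and $\Phi \in D_{i+j}U^*$, one has $\mu_i(\mu_j(\Phi)) = (\mu_i \mu_j)(\Phi) = \mu_j(\mu_i(\Phi))$.

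First I would abbreviate $\xi = p^{-1}\bigl[(z\mu_{0,n-1})(\Phi_{2n-1})\bigr] \in \Sym_{n-1}U$, the common inner piece of both displayed expressions. For each $m_1 \in \binom{y,z}{n-1}$, the scalar $\xi\bigl[(ym_1)(\Phi_{2n-1})\bigr]$ that appears in the first expression satisfies, by associativity,
\[
\xi\bigl[(ym_1)(\Phi_{2n-1})\bigr] \;=\; (ym_1)\bigl[\xi(\Phi_{2n-1})\bigr],
\]
since $ym_1 \in \Sym_n U$, $\xi \in \Sym_{n-1}U$, and $n+(n-1)=2n-1$. Now expand $\xi(\Phi_{2n-1}) \in D_n U^*$ in the monomial basis, writing $\xi(\Phi_{2n-1}) = \sum_{m \in \binom{x,y,z}{n}} \alpha_m \, m^*$ with $\alpha_m \in \kk$. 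Because $(ym_1)(m^*) = 1$ when $m = ym_1$ and $0$ otherwise, the coefficient of $m_1^*$ in the first expression equals $\alpha_{ym_1}$, so the first expression is $\sum_{m_1 \in \binom{y,z}{n-1}} \alpha_{ym_1}\, m_1^*$.

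For the second expression, the direct sum decomposition $\Sym_n U = x\Sym_{n-1}U \oplus \Sym_n U_0$ described in \ref{order} identifies $D_nU_0^*$ with the subspace of $D_nU^*$ spanned by $\{m^* : m \in \binom{y,z}{n}\}$, so
\[
\proj_{D_nU_0^*}\bigl[\xi(\Phi_{2n-1})\bigr] \;=\; \sum_{m \in \binom{y,z}{n}} \alpha_m\, m^*.
\]
Applying the $y$-action via the formula $y(m^*) = (m/y)^*$ when $y \mid m$ and $0$ otherwise (from \ref{x^*}) picks out exactly those $m \in \binom{y,z}{n}$ divisible by $y$, which are precisely the monomials $m = ym_1$ for $m_1 \in \binom{y,z}{n-1}$. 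Hence the second expression equals $\sum_{m_1 \in \binom{y,z}{n-1}} \alpha_{ym_1}\, m_1^*$, matching the first.

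I don't anticipate any genuine obstacle beyond careful bookkeeping. The only subtle point is the identification of $D_n U_0^*$ as a subspace of $D_n U^*$ via the decomposition of $\Sym_n U$ above, which is what makes the projection well-defined and what matches the indexing of $\alpha_m$ on both sides. Once this identification is made explicit, the argument reduces to a single application of associativity and an elementary comparison of coefficients.
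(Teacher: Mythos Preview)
Your proof is correct and follows essentially the same approach as the paper: both arguments use the associativity of the $\Sym_\bullet U$-action on $D_\bullet U^*$ to rewrite the scalar coefficient as $(ym_1)\bigl[\xi(\Phi_{2n-1})\bigr]$, and then identify the sum $\sum_{m_1} \alpha_{ym_1}\,m_1^*$ with $y$ applied to the projection of $\xi(\Phi_{2n-1})$ onto $D_nU_0^*$. The only cosmetic difference is that the paper phrases the last step via the commutativity of the square relating $\proj$ and the $y$-action, whereas you verify it directly in coordinates.
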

\begin{proof}
 Recall that $p^{-1}[(z\mu_{0,n-1})(\Phi_{2n-1})]$ and $y$ and $m_1$ are all elements of the commutative ring $\Sym_\bullet U$ and $\Phi_{2n-1}$ is an element of the $\Sym_\bullet U$-module $D_\bullet U^*$. It follows that
\begin{align*}&\big(p^{-1}[(z\mu_{0,n-1})(\Phi_{2n-1})]\big)[(ym_1)
(\Phi_{2n-1})]\\
{}={}&m_1
\big[y 
\big((p^{-1}[(z\mu_{0,n-1})(\Phi_{2n-1})])(\Phi_{2n-1})\big)\big]
,
\end{align*}
where 
\begin{align*}(p^{-1}[(z\mu_{0,n-1})(\Phi_{2n-1})])&(\text{an element of $D_\bullet U^*$}),\\
y&(\text{an element of $D_\bullet U^*$}),\text{ and}\\
m_1&(\text{an element of $D_\bullet U^*$})
\end{align*}  
all represent the module action of $\Sym_\bullet U$ on $D_\bullet U^*$.
If $X$ is an element of $D_{n-1}U^*$, then 
$$\sum\limits_{m_1\in \binom{y,z}{n-1}}m_1(X)\cdot m_1^*=\proj_{D_{n-1}U_0^*}X$$
because
$$\sum\limits_{m_1\in \binom{y,z}{n-1}}m_1(X_0)\cdot m_1^*=X_0,$$if $X_0\in D_{n-1}U_0^*$; and
$$\sum\limits_{m_1\in \binom{y,z}{n-1}}m_1(X_1)\cdot m_1^*=0,$$if $X_1$ is in the submodule of $D_{n-1}U$ which is generated by$$\left\{(xm)^*\left| m\in\binom{x,y,z}{n-2}\right.\right\}.$$
Thus, \begingroup\allowdisplaybreaks\begin{align*}&\sum\limits_{m_1\in \binom{y,z}{n-1}}
(p^{-1}[(z\mu_{0,n-1})(\Phi_{2n-1}) ])[(ym_1)
(\Phi_{2n-1})]\cdot m_1^*\\{}={}&\proj_{D_{n-1}U_0^*}
\big(y \big[(p^{-1}[(z\mu_{0,n-1})(\Phi_{2n-1})])(\Phi_{2n-1})\big]\big)\\
{}={}&y\big(\proj_{D_{n}U_0^*}
\big[(p^{-1}[(z\mu_{0,n-1})(\Phi_{2n-1})])(\Phi_{2n-1})\big]\big)
.\end{align*}\endgroup The final equality holds because  
the diagram
$$\xymatrix{D_nU^*\ar[r]^{\proj}\ar[d]^{y}&D_nU_0^*\ar[d]^{y}\\D_{n-1}U^*\ar[r]^{\proj}&D_{n-1}U_0^*}$$ commutes.
\end{proof}
\begin{remark} 
We proved Theorem~\ref{NewTheorem} by proving that the $b_2$ of Theorem~\ref{NewTheorem} is the matrix for the coordinate-free map $b_2$ of Theorem~\ref{3.3}. In each of the Theorems, \ref{NewTheorem} and \ref{3.3}, the image of $b_3$ is the kernel of $b_2$ and $b_1=\Hom_R(b_3,R)$. It follows that $b_1$ from Theorem~\ref{3.3} is equal to a unit of $\kk$ times $b_1$ from Theorem~\ref{NewTheorem}. There are times that  the explicit formulas of Theorem~\ref{3.3} are more useful  than the instruction ``take the maximal order Pfaffians of this alternating matrix''.\end{remark}

\section{The main theorem.}
Theorem~\ref{main} is the main result in the paper.
\begin{observation}\label{O4.1}
Let $R=\kk[x,y,z]$ be a standard graded polynomial ring over a field. If $J$ is a quadratically presented grade three Gorenstein ideal in $R$, then the Betti numbers of the minimal free resolution of $R/J$ by free $R$-modules have the form of {\rm(\ref{BN})} for some positive even number $n$.
In particular, the socle degree of $R/J$ is $2n-1$ and the minimal homogeneous generators of $J$ all have degree $n$. 
\end{observation}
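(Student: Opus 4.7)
The plan is to combine the Buchsbaum--Eisenbud structure theorem \cite{BE77} with the self-duality of the minimal free resolution of a Gorenstein quotient and a simple Pfaffian degree count.

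First, since $J$ is a grade three Gorenstein ideal, the Buchsbaum--Eisenbud structure theorem gives the ungraded shape of the minimal free resolution of $R/J$ as
$$0 \to R \to R^{2m+1} \to R^{2m+1} \to R,$$
where the middle differential is an alternating matrix $b_2$, and the right-most differential is the row of signed maximal order Pfaffians of $b_2$. Because $J$ is quadratically presented, every minimal generator has one common degree $n$ and every entry of $b_2$ is a homogeneous form of degree two; so the graded refinement reads
$$0 \to R(-t) \to R(-n-2)^{2m+1} \to R(-n)^{2m+1} \to R$$
for some $t$. Applying $\operatorname{Hom}_R(-,R)$ and using that a grade three Gorenstein resolution is isomorphic to its $R$-dual (up to a twist by $t$) matches $R(-n)$ with $R(-t+n+2)$, which forces $t=2n+2$. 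The socle degree of the Artinian Gorenstein quotient $R/J$ is then $t-3 = 2n-1$, as claimed.

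Second, count Pfaffian degrees to pin down the parity of $n$. Each minimal generator of $J$ is the Pfaffian of a $2m \times 2m$ alternating submatrix of $b_2$ whose entries are all quadratic forms; every such Pfaffian is a homogeneous form of $R$ of degree exactly $2m$. Since these Pfaffians are the minimal generators of $J$, which live in degree $n$, we conclude $n = 2m$. In particular $n$ is even and the Betti number $2m+1$ equals $n+1$, so the graded resolution takes precisely the form of (\ref{BN}).

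The main content, and the only place where something more than bookkeeping is needed, is the unpacking of the phrase ``quadratically presented'': one must justify that every generator has the same degree $n$ and that every entry of $b_2$ has degree exactly $2$. Once this is in hand, Buchsbaum--Eisenbud, Gorenstein self-duality, and the elementary Pfaffian degree count combine immediately to give (\ref{BN}) and to identify the socle degree and generator degrees.
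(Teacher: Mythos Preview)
Your argument is correct and follows essentially the same route as the paper: both rest on the Buchsbaum--Eisenbud structure theorem and read the graded shape off from the fact that the alternating presentation matrix has quadratic entries. The only cosmetic difference is that the paper simply \emph{defines} $n$ by letting the alternating matrix have size $n+1$ (odd, so $n$ is even) and writes down the resolution directly, whereas you introduce a separate parameter $m$, use self-duality to pin down the back twist, and then recover $n=2m$ via the Pfaffian degree count; the paper's shortcut avoids this detour but your version makes the socle-degree computation more explicit. Your closing caveat about unpacking ``quadratically presented'' is fair---the paper's proof is equally terse on that point.
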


\begin{proof} 
Apply the Theorem of Buchsbaum and Eisenbud \cite{BE77} to see that $J$ is minimally generated by the maximal order Pfaffians of some alternating matrix $X$ of odd size, whose entries, by our hypothesis, are quadratic forms in $R$. Let $X$ have $n+1$ rows and columns, where $n$ is even; and let $\underline{x}$  be the row vector of signed maximal order Pfaffians of $X$. (See \ref{2.7}, if necessary.) The minimal homogeneous resolution of $R/J$ by free $R$-modules has the form
$$0\to R(-(2n-2))\xrightarrow{ \underline{x}\transpose}R(-(n+2))^{n+1}
\xrightarrow{X} R(-n)^{n+1} \xrightarrow{ \underline{x}} R.$$\end{proof}

In Theorem~\ref{main}, $J$ is a homogeneous grade three Gorenstein ideal $R=\kk[x,y,z]$.  We assume that the socle degree of $R/J$ is $2n-1$, for some positive even integer $n$, and that all homogeneous generators of $J$ have
degree at least $n$. We also assume that $x$ is a weak Lefschetz element on $R/J$. We determine if $J$ is quadratically presented; and if so, then we give the presentation matrix of $J$ in terms of the Macaulay inverse system for $J$. 

\begin{theorem}
\label{main}
Let $R=\kk[x,y,z]$ be a standard graded polynomial ring over the  field $\kk$ and $n$ be a positive even integer. Let $U$ be the vector space $R_1$, $\Phi_{2n-1}$ be an element of $D_{2n-1}U^*$ and $J$ be the ideal $$J=\ann_{\Sym_\bullet U} \Phi_{2n-1}$$ of $R=\Sym_\bullet U$. Assume that $J_{n-1}=0$ and that $x$ is a weak Lefschetz element on $R/J$. Let
$$I=\ann_{\Sym_\bullet U} x(\Phi_{2n-1}).$$
Then the following statements hold.
\begin{enumerate}[\rm(a)]
\item\label{main.a} The ideal $I$ is a linearly presented grade three Gorenstein ideal in $R$ and the minimal homogeneous resolution of $R/I$ by free $R$-modules is 
$$0\to R(-2n-1)\xrightarrow{b_3} R^{2n+1}(-n-1)\xrightarrow{b_2}R^{2n+1}(-n)\xrightarrow{b_1}R,$$
where 
$$b_2=\left[\begin{array}{c|c} xA'& B\\\hline\fakeht
-B\transpose&D\end{array}\right],$$
 as described in Theorem~{\rm \ref{NewTheorem}}.
\item\label{main.b} Let $g_i$ be $(-1)^{i+1}$ times the Pfaffian of the submatrix of $b_2$ obtained by deleting row and column $i$. Then $g_{n+1},\dots,g_{2n+1}$ is part of a minimal generating set for $J$.
\item\label{main.c} The ideal $J$ is quadratically presented if and only if 
the matrix $A'$ is invertible.
\item\label{main.d} If $J$ is quadratically presented, then $J=(g_{n+1},\dots,g_{2n+1})$ and 
the minimal homogeneous resolution of $R/J$ by free $R$-modules is
\begin{equation}
\label{resol}0\to R(-2n-2)\xrightarrow{c_3} R(-n-2)^{n+1}\xrightarrow{c_2}
R(-n)^{n+1}\xrightarrow{c_1}R,\end{equation} where
$$c_2 =B\transpose (A')^{-1}B+xD,$$
$c_1 $ is equal to the row vector of maximal order Pfaffians of $ c_2$, and $c_3 =c_1 \transpose$.
\end{enumerate}
\end{theorem}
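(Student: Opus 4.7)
The plan is to prove part (a), then part (b), and finally the biconditional of (c) together with (d).

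\textbf{Part (a).} I will first exploit $J_{n-1} = 0$: it gives $\dim(R/J)_{n-1} = \binom{n+1}{2}$, and the symmetric Hilbert function of the Gorenstein quotient $R/J$ of socle degree $2n-1$ then forces $\dim(R/J)_n = \binom{n+1}{2}$ as well. Since $x$ is a weak Lefschetz element and multiplication by $x$ sends $(R/J)_{n-1}$ to an equidimensional target, WLP promotes this map to an isomorphism, so $(J{:}x)_{n-1} = J_{n-1} = 0$ and hence $I_{n-1} = 0$. I will also note that $x\Phi_{2n-1} \neq 0$ (otherwise $x \in J$, contradicting the socle degree being $2n-1$), so $R/I$ has socle degree $2n-2$. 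Proposition \ref{1.8} then forces $I$ to be linearly presented, and Theorem \ref{NewTheorem} supplies the claimed resolution.

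\textbf{Part (b).} By Theorem \ref{3.3}, the Pfaffians $g_{n+1}, \ldots, g_{2n+1}$ are the $b_1$-images of a basis of $\Sym_n U_0$, each of the form $\mu_{0,n} - xp^{-1}(\mu_{0,n}(\Phi_{2n-1}))$. I will exploit the defining relation $p(\eta) = \eta(x\Phi_{2n-1})$, which gives $xp^{-1}(\nu)(\Phi_{2n-1}) = \nu$ for every $\nu \in D_{n-1}U^*$, to conclude that each $g_{n+j}$ annihilates $\Phi_{2n-1}$ and so lies in $J$. Since part (a) gives $\dim J_n = n+1$, these linearly independent elements span $J_n$, and together with $J_{n-1} = 0$ form part of a minimal generating set for $J$.

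\textbf{Forward direction of (c)/(d).} I will exploit the block decomposition of $b_1 b_2 = 0$: the first $n$ columns yield $x(g_1, \ldots, g_n)A' = (g_{n+1}, \ldots, g_{2n+1})B\transpose$, and the last $n+1$ columns yield $(g_1, \ldots, g_n)B + (g_{n+1}, \ldots, g_{2n+1})D = 0$. When $A'$ is invertible, elimination gives $hc_2 = 0$ with $h := (g_{n+1}, \ldots, g_{2n+1})$ and $c_2 := B\transpose(A')^{-1}B + xD$ having quadratic entries, and the Pfaffian Schur-complement identity applied to $b_2$ gives $g_{n+j} = \Pf(A') \cdot g'_j$, where $g'_j$ is the $j$-th Pfaffian of $c_2$. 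Thus $J' := (g_{n+1}, \ldots, g_{2n+1})$ coincides with the ideal of maximal Pfaffians of $c_2$. The main obstacle is establishing $\text{grade}(J') = 3$, so that Buchsbaum--Eisenbud will conclude that the Pfaffian complex resolves $R/J'$. I will handle this by localizing at $S := R[x^{-1}]$: the first-block syzygy expresses $g_1, \ldots, g_n$ in terms of $h$, so $IS = J'S$, and $IS = S$ because $x \in \sqrt{I}$ (as $I$ is $\mathfrak{m}$-primary). Hence $V(J') \subseteq V(x)$. Reducing each $g_{n+j}$ modulo $x$ recovers $y^{n+1-j}z^{j-1}$, so $J' + (x) \supseteq (x) + (y,z)^n$, which is $\mathfrak{m}$-primary, giving $V(J') \cap V(x) = \{\mathfrak{m}\}$; combined with $V(J') \subseteq V(x)$ this yields $V(J') = \{\mathfrak{m}\}$ and $\text{grade}(J') = 3$. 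Buchsbaum--Eisenbud now gives the Pfaffian complex of $c_2$ as the minimal free resolution of the Gorenstein ring $R/J'$, whose Hilbert series matches that of $R/J$ (both determined by $J_{n-1}=0$ and socle degree $2n-1$). Since $J' \subseteq J$ and their Hilbert series agree, $J = J'$, yielding (d).

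\textbf{Converse of (c).} If $A'$ is singular, I will pick $0 \neq v \in \ker A'$ and right-multiply the first-block syzygy by $v$ to obtain $h(B\transpose v) = 0$, a syzygy on $(g_{n+1}, \ldots, g_{2n+1})$ whose coefficients $B\transpose v$ are linear forms. To argue $B\transpose v \neq 0$, I will observe that otherwise the vector $\hat v := (v, 0)\transpose \in R^{2n+1}$ would satisfy $b_2 \hat v = 0$, placing $\hat v \in \ker b_2 = R \cdot b_1\transpose$; but $b_1\transpose$ has entries of positive degree $n$, forcing the constant vector $\hat v$ to be zero, a contradiction. Hence a nontrivial linear syzygy exists on the degree-$n$ generators $g_{n+1}, \ldots, g_{2n+1}$ of $J_n$. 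If $J$ were quadratically presented, Observation \ref{O4.1} and part (b) would give $J = (g_{n+1}, \ldots, g_{2n+1})$ with only quadratic first syzygies, contradicting the linear syzygy just produced.
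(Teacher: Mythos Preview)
Your proof is correct and follows the same strategy as the paper throughout: the arguments for (a), (b), the singular-$A'$ direction of (c), and the Pfaffian Schur-complement identification of $(g_{n+1},\dots,g_{2n+1})$ with the Pfaffians of $c_2$ are essentially identical. Your execution differs only in two minor details: to obtain $\operatorname{grade}J'=3$ you reduce the explicit formula $b_1(\mu_{0,n})=\mu_{0,n}-xp^{-1}(\cdots)$ modulo $x$ to see $J'+(x)\supseteq(x)+(y,z)^n$ directly (the paper instead reduces $c_2$ modulo $x$ and applies the identity $\Pf(M^{\rm T}A'M)=\det M\cdot\Pf A'$ to extract $y^n$ and $z^n$), and to conclude $J'=J$ you compare Hilbert series (the paper uses Macaulay duality to force $\Phi_{2n-1}$ to be a scalar multiple of the inverse system of $J'$).
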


\begin{proof}
(\ref{main.a}) Let $p:\Sym_{n-1}U\to D_{n-1}U^*$ be the $\kk$-module homomorphism with
$$p(\mu_{n-1})=\mu_{n-1}(x\Phi_{2n-1}), \quad\text{for $\mu_{n-1}\in \Sym_{n-1}U$}.$$
According to Proposition~\ref{1.8} it suffices to show that $p$ is injective. Suppose 
  $\mu_{n-1}$, in $\Sym_{n-1}U$, is  in the kernel of $p$. Then $(x\mu_{n-1})(\Phi_{2n-1})=0$; $x\mu_{n-1}\in J$; and $\mu_{n-1}$ represents an element in the kernel of the $\kk$-module map
\begin{equation}\label{full rank}(R/J)_{n-1}\xrightarrow{\ \ x\ \ }(R/J)_n,\end{equation} which is given by multiplication by $x$.
On the other hand, the hypothesis that $x$ is a weak Lefschetz element on $R/J$ guarantees that the rank of (\ref{full rank}) is equal to
$$\min \{\dim_\kk (R/J)_{n-1}, \dim_\kk(R/J)_n\}.$$ The Hilbert function of the standard graded Artinian Gorenstein algebra $R/J$, with socle degree $2n-1$, is symmetric in the sense that $$\dim_\kk (R/J)_{i}=\dim_\kk(R/J)_{2n-1-i},$$ for all $i$. It follows that $(R/J)_{n-1}$ and $(R/J)_n$ have the same dimension;
(\ref{full rank}) is injective; and $\mu_{n-1}\in J_{n-1}=0$.

\ms\noindent(\ref{main.b}) Recall that Theorem~\ref{NewTheorem} is Theorem~\ref{3.3} written in the language of matrices rather than the language of multilinear algebra. The assertion of (\ref{main.b}) is obvious in the language of multilinear algebra. Indeed, the row vector
\begin{equation}\label{late gs}\bmatrix g_{n+1},&\dots,&g_{2n+1}\endbmatrix\end{equation}
is equal to some unit of $\kk$ times the row vector
$$\bmatrix b_1(m_1),&\dots,&b_1(m_{n+1})\endbmatrix,$$
where $m_1,\dots, m_{n+1}$ is the basis $y^n,y^{n-1}z,\dots,yz^{n-1},z^n$ of $\Sym_nU_0$, as given in \ref{order}, and 
$$b_1(\mu_{0,n})=\mu_{0,n}-xp^{-1}(\mu_{0,n}(\Phi_{2n-1})),\quad\text{for $\mu_{0,n}\in \Sym_nU_0$,}$$
 as defined in the statement of Theorem~\ref{3.3}. Fix 
$\mu_{0,n}\in \Sym_{n}U_0$. 
 We show that $b_1(\mu_{0,n})$ is in $J$ by showing that $[b_1(\mu_{0,n})](\Phi_{2n-1})=0$. Observe that
\begin{align*}[xp^{-1}(\mu_{0,n}(\Phi_{2n-1}))](\Phi_{2n-1})
{}={}&[p^{-1}(\mu_{0,n}(\Phi_{2n-1}))](x(\Phi_{2n-1}))\\
{}={}&p[p^{-1}(\mu_{0,n}(\Phi_{2n-1}))]
{}={}\mu_{0,n}(\Phi_{2n-1}).\end{align*} 
Hence, $g_{n+1},\dots,g_{2n+1}$ are elements of $J$ of degree $n$ which form part of a minimal generating set of $I$. By hypothesis, there are no elements of $J$ of degree less than $n$. It follows that $g_{n+1},\dots,g_{2n+1}$ is part of a minimal generating set of $J$. Let $J'$ be the ideal $(g_{n+1},\dots,g_{2n+1})$. 

\ms\noindent(\ref{main.c}) and (\ref{main.d}) We first show that if $A'$ is not invertible, then there is a non-zero linear relation on the minimal generators of $J$. 

Recall that $A'$ is an $n\times n$ matrix of elements of $\kk$. If $A'$ is not invertible, then there exists a non-zero vector $v\in \kk^n$ with $Av\neq 0$.
The matrix $b_2$ appears in a minimal homogeneous resolution; so, the columns of $b_2$ are linearly independent and  $$b_2\bmatrix v\\\hline 0\endbmatrix =\bmatrix 0\\\hline \fakeht-B\transpose v\endbmatrix$$ is a non-zero linear relation on $b_1$. It follows that $-B\transpose v$ is a non-zero linear relation on (\ref{late gs}). However, according to (\ref{main.b}), $g_{n+1},\dots,g_{2n+1}$ is part of a minimal generating set for $J$. We conclude that if $A'$ is not invertible, then there is a non-zero linear relation on a minimal generating set of $J$ and $J$ is not quadratically presented. 

Now we assume that $A'$ is invertible. We prove that $J$ is quadratically presented and that the complex of (\ref{main.d}) is the minimal homogeneous  resolution of $R/J$ by free $R$-modules.

\begin{claim} 
\label{Claim1}
If $A'$ is invertible, then the row vector {\rm(\ref{late gs})} is a unit of $\kk$ times the row vector of  
signed maximal order Pfaffians of $c_2=B\transpose (A')^{-1}B+xD$.\end{claim}

Fix the index $i$ with $1\le i\le n+1$; let
 $B'$ be $B$ with column $i$ deleted and $D'$ be $D$ with row $i$ and column $i$ deleted. 
Let $\Pf$ mean ``the Pfaffian of''. The element $x$ of the polynomial ring $R$ is a regular element. No damage is done if we compute in the ring $R[\frac 1x]$.
Observe that the Pfaffian of $b_2$ with row and column $n+i$ deleted is equal to 
\begingroup\allowdisplaybreaks
\begin{align*}
&{}\Pf\left[\begin{array}{c|c} xA'&B'\\\hline-(B')\transpose&D'\fakeht\end{array}\right]
\\ 
{}={}&\Pf \left(\left[\begin{array}{c|c} I_n&0\\\hline \frac1x (B'){}\transpose (A'){}^{-1}&I_n\fakeht\end{array}\right]   \left[\begin{array}{c|c} xA'&B'\\\hline-(B'){}\transpose&D'\fakeht\end{array}\right]
\left[\begin{array}{c|c} I_n&-\frac 1x(A'){}^{-1}B'
\vphantom{\Big)}\\\hline0&I_n\fakeht\end{array}\right]\right)
\\
{}={}&\Pf \left[\begin{array}{c|c} xA'&0\\\hline0&\frac 1x\left[(B'){}\transpose (A'){}^{-1}B'\right]+D'
\fakeht\end{array}\right]\\
{}={}&\Pf (xA')\cdot \Pf({\ts\frac 1x}\left[(B'){}\transpose (A'){}^{-1}B'+xD'\right])\\
{}={}&x^{n/2} \Pf(A'){\ts\frac 1{x^{n/2}}}\Pf\left[(B'){}\transpose (A'){}^{-1}B'+xD'\right],
\end{align*} \endgroup which is equal to the Pfaffian of $A'$ times the Pfaffian of $c_2$ with row and column $i$ deleted. This completes the proof of Claim~\ref{Claim1}.

\begin{claim}
\label{Claim2} The ideal $J'$ is a quadratically presented grade three Gorenstein ideal and the minimal homogeneous resolution of $R/J'$ is 
given in {\rm(\ref{resol})}.\end{claim}

The row vector (\ref{late gs}) is the row vector of signed maximal order Pfaffians of the $(n+1)\times(n+1)$ alternating matrix $c_2$, and $n+1$ is odd. We show that 
$$0\to R(-2n-2)\xrightarrow{(\text{\rm\ref{late gs}})\transpose} 
R(-n-2)^{n+1}\xrightarrow{c_2}R(-n)^{n+1}\xrightarrow{(\text{\rm\ref{late gs}})} R$$ is the minimal homogeneous resolution of $R/J'$ by free $R$-modules by showing that the grade of $J'$ is at least three; see \cite{BE77} or \cite{BE73}. In particular, we show that $(x,y,z)$ is contained in the radical of $J'$. The socle degree of $R/I$ is $2n-2$; so the ideal $(x,y,z)^{2n-1}$ is contained in $I$.

The fact that the columns of
$$\bmatrix xA'\\\hline\fakeht -B\transpose\endbmatrix$$ are relations on $[g_1,\dots,g_{2n+1}]$, with $A'$ invertible, $(g_{n+1},\dots,g_{2n+1})=J'$, and $I$ equal to $(g_1,\dots,g_n)+J'$ ensures that $xI\subseteq J'$. Thus, $x^{2n}\in J'$. 

The matrix $B_2$ is the $n\times (n+1)$ matrix
$$B_2=\left[\begin{array}{c|c|c|c|c|c}
z&-y&0&\cdots&0&0\\\hline
0&z&-y&\ddots&\vdots&\vdots\\\hline
\vdots&\ddots&\ddots&\ddots&0&\vdots\\\hline
\vdots&\ddots&\ddots&\ddots&-y&0\\\hline
0&\cdots&\cdots&0&z&-y\end{array}\right].$$ 
The determinant of $B_2$ with column $n+1$ deleted is $z^n$ and the determinant of $B_2$ with column $1$ deleted is $y^n$. It is well known that if $A'$ is an alternating $n\times n$ matrix and $M$ is an $n\times n$ matrix, then the Pfaffian of $M\transpose A' M$ is $\det M \Pf A'$; see for example \cite[Thm. 3.28]{A57}. It follows that $z^n$ and $y^n$ are in the ideal generated by the maximal order Pfaffians of $c_2$, together with $x$.  We conclude that $(x,y,z)$ is contained in the radical of $J'$.
This completes the proof of Claim~\ref{Claim2}.

\bs We complete the proof by showing that $J'=J$. For this we use the Macaulay duality. The ideals under consideration are related by
\begin{equation}\label{incl} J'\subseteq J\subseteq I.\end{equation}
The Macaulay inverse systems
$$J=\ann_{\Sym_\bullet}\Phi_{2n-1} \quad\text{and}\quad I=\ann_{\Sym_\bullet}x(\Phi_{2n-1})$$ are known. The socle degree of $R/J'$ is $2n-1$. Let $\Psi_{2n-1}\in D_{2n-1}U^*$ be the Macaulay inverse system of $J'$; hence $J'=\ann _{\Sym_\bullet}\Psi_{2n-1}$. The inclusions of (\ref{incl}) give rise to
inclusions 
$$\ann_{D\bullet U^*}I\subseteq \ann_{D\bullet U^*}J\subseteq \ann_{D\bullet U^*}J'.$$If $\Phi$ is in $D\bullet U^*$, then 
$$\ann_{D_\bullet U^*}(\ann_{\Sym_\bullet U}(\Phi))=(\Phi),$$ where $(\Phi)$ is the $\Sym_\bullet U$-submodule of $D_\bullet U^*$ generated by $\Phi$. It follows that 
$$(x\Phi_{2n-1})\subseteq (\Phi_{2n-1})\subseteq (\Psi_{2n-1}).$$Thus,
$\Phi_{2n-1}=\alpha \Psi_{2n-1}$ for some unit $\alpha$ in $\kk$ and $J=J'$.
\end{proof}

\section{An example.}\label{sect5}
\begin{proposition}
\label{5.1}If $\kk$ is a field of  characteristic  zero, $R=\kk[x,y,z]$, 
 $n$ is an even integer, and
$$J=(x^{n+1},y^{n+1},z^{n+1}):(x+y+z)^{n+1},$$ 
then the graded Betti numbers of $R/J$ are given in {\rm(\ref{BN})}
and $x$ is a weak Lefschetz element for
 $R/J$. 
\end{proposition}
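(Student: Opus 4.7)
The plan is to treat the Betti numbers and the weak Lefschetz property separately.

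First, I would observe that since $K:=(x^{n+1},y^{n+1},z^{n+1})$ is a regular sequence, $R/K$ is Gorenstein of socle degree $3n$ with Macaulay inverse system $(x^ny^nz^n)^*$. Since $h:=(x+y+z)^{n+1}$ has degree $n+1$ and does not lie in $K$, the colon ideal $J=K:h$ is Gorenstein of socle degree $3n-(n+1)=2n-1$, with Macaulay inverse system $\Phi_{2n-1}=(x+y+z)^{n+1}\bigl((x^ny^nz^n)^*\bigr)$. By \cite[Prop.~5.7]{KRV22}, $J$ is minimally generated by $n+1$ forms of degree $n$, so in particular $J_{n-1}=0$; the Buchsbaum-Eisenbud structure theorem then forces the minimal free resolution of $R/J$ to be self-dual up to the twist $-(2n-1)-3=-(2n+2)$, so the middle term must be $R(-n-2)^{n+1}$. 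This yields the Betti numbers of (\ref{BN}).

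Next, I would establish the weak Lefschetz property by showing that $\times x:(R/J)_i\to (R/J)_{i+1}$ is injective for every $i\le n-1$. By Gorenstein duality the rank of $\times x$ at degree $i$ equals its rank at degree $2n-2-i$, so combined with the symmetry of the Hilbert function of $R/J$, the right-half surjectivities follow automatically from the left-half injectivities. Since $J_i=0$ for $i\le n-1$, each such injectivity is equivalent to $(J:x)_i=0$; and because $R$ is a domain, all of these will follow from the single vanishing $(J:x)_{n-1}=0$, since any nonzero $f\in(J:x)_i$ with $i<n-1$ would give a nonzero element $x^{n-1-i}f\in(J:x)_{n-1}$.

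To obtain $(J:x)_{n-1}=0$, I would use $(K:h):x=(K:x):h$ together with the straightforward identity $(x^{n+1},y^{n+1},z^{n+1}):x=(x^n,y^{n+1},z^{n+1})$ to rewrite
\[
J:x=K':(x+y+z)^{n+1},\qquad K':=(x^n,y^{n+1},z^{n+1}).
\]
The ring $R/K'$ is a monomial complete intersection of socle degree $(n-1)+n+n=3n-1$, and by Stanley's strong Lefschetz theorem for monomial complete intersections in characteristic zero, $\ell:=x+y+z$ is a strong Lefschetz element for $R/K'$. Hence
\[
\times\,\ell^{n+1}:(R/K')_{n-1}\longrightarrow (R/K')_{2n}
\]
has maximal rank, and Gorenstein symmetry of $R/K'$ yields $\dim_\kk(R/K')_{n-1}=\dim_\kk(R/K')_{2n}$ (since $2n=(3n-1)-(n-1)$). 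Thus this multiplication map is bijective, and since $K'_{n-1}=0$ its kernel is exactly $(J:x)_{n-1}$, which is therefore zero. The principal obstacle is the reduction of WLP to this single vanishing and the identification of $J:x$ as a colon of a monomial complete intersection by a power of $x+y+z$; once that is done, Stanley's strong Lefschetz theorem delivers the conclusion directly.
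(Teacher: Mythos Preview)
Your argument is correct. Both your proof and the paper's hinge on the same underlying fact---namely that the ideal $\mathcal J=(x^n,y^{n+1},z^{n+1}):(x+y+z)^{n+1}$ has no elements in degree $n-1$---but the packaging differs. For the Betti numbers, the paper invokes Boij's result on compressed algebras to obtain the shape \eqref{1-28} and then uses \cite[Prop.~5.7]{KRV22} to force $v=0$; you instead use \cite[Prop.~5.7]{KRV22} to identify $B_1$ and then read off $B_2$ from Gorenstein self-duality, which is a little more direct. For the weak Lefschetz property, the paper works on the Macaulay-duality side: it recognises $x\Phi_{2n-1}$ as an inverse system for $\mathcal J$, cites \cite{RRR91} for $\mathcal J_{n-1}=0$, and then feeds this into the determinant criterion of Lemma~\ref{1-30} and Proposition~\ref{1.8}. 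You stay on the ideal-theoretic side: the identity $(K:h):x=(K:x):h$ identifies $J:x$ with $\mathcal J$, and Stanley's strong Lefschetz theorem for monomial complete intersections gives the needed injectivity of $\times\ell^{\,n+1}$ on $(R/K')_{n-1}$. Your route avoids the auxiliary Lemma~\ref{1-30} and is self-contained, while the paper's route dovetails with the machinery (the map $p$ and its invertibility) used elsewhere in the paper. One small point worth making explicit in your write-up is that injectivity at degrees $i\le n-1$ really does give maximal rank there, since $J_i=0$ forces $\dim(R/J)_i=\dim R_i$ to be weakly increasing up to $i=n-1$; this is implicit in your ``left-half/right-half'' language but deserves a sentence.
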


\begin{proof}
The field $\kk$ has characteristic zero. Apply \cite[Obs.~3.13.(d)]{KRV22} (which is based on \cite[Thm.~5]{RRR91}) to see that $R/J$ is a compressed ring with socle degree $s=2n-1$. Now apply  \cite[Lem.~5.4]{KRV22} (which is based on \cite[Prop.~3.2]{B99}) to see that the minimal homogeneous resolution of $R/J$ by free $R$-modules has the form 
\begin{equation}0\to R(-2n-2)\to \begin{matrix} R(-n-1)^v\\\p\\R(-n-2)^{n+1}\end{matrix} \to\begin{matrix}
R(-n)^{n+1}\\\p\\R(-n-1)^{v}\end{matrix}
\to R,\label{1-28}\end{equation} 
for some integer $v$.
A minimal generating set for $J$ which consists of $n+1$ homogeneous forms of degree $n$ is given in \cite[Prop.~5.7]{KRV22} (with $\epsilon=0$ and $d=n+1$). In particular, $J$ has $v=0$ minimal generators of degree $n+1$ and (\ref{1-28}) is equal to (\ref{BN}).

We complete the argument by showing that $x$ is a Lefschetz element for $R/J$. We have seen that the minimal generators of $J$ all have degree $n$ and that the socle of $R/J$ has degree $s=2n-1$. In particular, $J_{(s-1)/2}=J_{n-1}=0$ and $(R/J)_{(s-1)/2}=R_{n-1}$. Let $N=\binom{n+1}2$, let $\mu_1,\dots,\mu_N$ be the monomials of $R$ of degree $n-1$, 
and  $\Phi$ in $D_{2n-1}R_1^*$ be a Macaulay inverse system for $J$, where $$(-)^*\text{ means }\Hom_{\kk}(-,\kk).$$
Define $\psi:(R/J)_s\to \kk$ by setting 
$$\psi(\bar \mu)=\mu(\Phi)$$ 
where $\mu\in R_s$ and $\bar \mu$ is the image of $\mu$ in $(R/J)_s$. Observe that $\psi:(R/J)_s\to \kk$ is an isomorphism.
 Apply Lemma~\ref{1-30}. In order to show that $x$ is a Lefschetz element for $R/J$,  it suffices to show that $\det(\psi(\mu_ix\mu_j))$ is non-zero. 

Observe that $$(\mu_i x \mu_j)(\Phi)=(\mu_i  \mu_j)(x(\Phi))$$ and that $x(\Phi)$ is a Macaulay inverse system for $$\mathcal J=(x^n,y^{n+1},z^{n+1}):(x+y+z)^{n+1}.$$ Apply \cite[Thm. 5]{RRR91} to see that $\mathcal J_{n-1}=0$. Observe that the socle degree of $R/\mathcal J$ is the socle degree of $R/(x^n,y^{n+1},z^{n+1})$ minus the degree of $(x+y+z)^{n+1}$; hence $2n-2$. It follows (see, for example, Proposition~\ref{1.8}) that $\mathcal J$ is 
 linearly presented; 
hence, in particular, the matrix $(\mu_i\mu_j(x\Phi))$ is invertible; see Proposition~\ref{1.8}. The proof is complete.   
\end{proof}

\begin{lemma} 
\label{1-30}
Let $R$ be a standard graded polynomial ring over a field $\kk$, and $J$ be a homogeneous ideal of $R$ with $R/J$ Gorenstein and Artinian with odd socle degree $s$. Fix an isomorphism $\psi:(R/J)_s\to \kk$. Let $\ell$ be a linear form in $R$ and $\mu_1,\dots,\mu_N$ be 
a basis for $(R/J)_{(s-1)/2}$. Consider the $N\times N$ matrix $M$ with entry $\psi(\mu_i\ell \mu_j)$ in row $i$ and column $j$. 
 Then  $\ell$ is a Lefschetz element for $R/J$ if and only if  
  $\det M\ne 0$.
 \end{lemma}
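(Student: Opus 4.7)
My plan is to view $M$ as the Gram matrix of a bilinear form on $(R/J)_{(s-1)/2}$ coming from the Gorenstein pairing, reduce WLP for $\ell$ to a single middle multiplication map being an isomorphism, and then invoke standard gradedness together with Gorenstein duality to propagate that single isomorphism into injectivity of $\times\ell$ in every lower degree.

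For the first reduction, Gorenstein symmetry gives $\dim_\kk(R/J)_i=\dim_\kk (R/J)_{s-i}$, so $(R/J)_{(s-1)/2}$ and $(R/J)_{(s+1)/2}$ both have dimension $N$, and the composition $(R/J)_{(s-1)/2}\otimes(R/J)_{(s+1)/2}\xrightarrow{\text{mult}}(R/J)_s\xrightarrow{\psi}\kk$ is a perfect pairing. The $(i,j)$-entry $\psi(\mu_i\ell\mu_j)$ of $M$ is this pairing evaluated at $\mu_i$ and $\ell\mu_j$, so $\det M\ne 0$ is equivalent to the map $\times\ell\colon(R/J)_{(s-1)/2}\to(R/J)_{(s+1)/2}$ having trivial kernel, and hence (by equal dimensions) to it being an isomorphism.

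It remains to show this single isomorphism is equivalent to $\ell$ being a weak Lefschetz element. The forward direction is immediate: WLP forces the middle map to be injective or surjective, and equal dimensions promote either to an isomorphism. For the converse, the Gorenstein pairing makes $\times\ell\colon(R/J)_i\to(R/J)_{i+1}$ the $\kk$-dual of $\times\ell\colon(R/J)_{s-i-1}\to(R/J)_{s-i}$, so surjectivity for $i>(s-1)/2$ is equivalent to injectivity at the mirror degree $<(s-1)/2$, and it suffices to prove injectivity for all $0\le i<(s-1)/2$. Suppose $0\ne a\in(R/J)_i$ with $\ell a=0$. Then $\ell$ kills every multiple of $a$, so $a\cdot(R/J)_{(s-1)/2-i}\subseteq\ker\bigl(\times\ell\colon(R/J)_{(s-1)/2}\to(R/J)_{(s+1)/2}\bigr)=0$. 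Because $R/J$ is standard graded, $(R/J)_{s-i}=(R/J)_{(s-1)/2-i}\cdot(R/J)_{(s+1)/2}$, whence $a\cdot(R/J)_{s-i}=0$. But the perfect Gorenstein pairing $(R/J)_i\times(R/J)_{s-i}\to\kk$ then forces $a=0$, a contradiction.

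The main obstacle is this converse: a priori the single middle isomorphism says nothing about earlier degrees. The trick is to spread a hypothetical kernel element $a\in(R/J)_i$ into the middle via multiplication by $(R/J)_{(s-1)/2-i}$, use the middle isomorphism to see this spread vanishes, and then propagate the vanishing up to the Gorenstein-dual degree $s-i$ via standard gradedness, where non-degeneracy of the Gorenstein pairing delivers the contradiction.
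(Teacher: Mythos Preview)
Your proof is correct. The reduction of $\det M\ne 0$ to the middle multiplication map $\times\ell\colon(R/J)_{(s-1)/2}\to(R/J)_{(s+1)/2}$ being an isomorphism, via the perfect Gorenstein pairing, matches the paper exactly, and the forward implication is the same.

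The genuine difference is in the converse. The paper simply observes that injectivity of the single middle map suffices and then cites \cite[Prop.~2.1]{MMN11} to conclude that $\ell$ is a Lefschetz element. You instead supply a self-contained argument: you use Gorenstein duality to reduce to injectivity in degrees $i<(s-1)/2$, and then for a hypothetical nonzero $a\in\ker(\times\ell)$ in such a degree you push $a$ up to the middle by multiplying by $(R/J)_{(s-1)/2-i}$, use the middle isomorphism to see these products vanish, propagate via standard gradedness to get $a\cdot(R/J)_{s-i}=0$, and finish with non-degeneracy of the pairing. This is precisely the content of the cited result, so your proof is more self-contained at the cost of a few extra lines; the paper's version is shorter but relies on an external reference.
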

\begin{proof} $(\Rightarrow)$
Assume that $\ell$ is a Lefschetz element for $R/J$; thus, multiplication by $\ell$ from $(R/J)_{i}$ to $(R/J)_{i+1}$  is an injection whenever $\dim(R/J)_{i}\le \dim (R/J)_{i+1}$.    
The vector spaces $(R/J)_{(s-1)/2}$ and $(R/J)_{(s+1)/2}$ have the same dimension because $R/J$ is Gorenstein.
It follows that the multiplication    $$\ell:(R/J)_{(s-1)/2}\to (R/J)_{(s+1)/2}$$ is an isomorphism. On the other hand, the map
$$(R/J)_{(s-1)/2}\t (R/J)_{(s+1)/2}\to \kk,$$ 
which sends $\theta\t \theta'$ to $\psi(\theta\theta')$ is a perfect pairing, again because $R/J$ is Gorenstein. Thus, there exists a basis $\mu_1',\dots,\mu_N'$ for $(R/J)_{(s+1)/2}$ for which the matrix 
$$\psi(\mu_i\mu_j')$$ looks like the identity matrix. 
The  elements $\mu_1',\dots,\mu_N'$ and the elements $\ell\mu_1,\dots,\ell\mu_N$ each form a  basis for $(R/J)_{(s+1)/2}$; hence, the matrix $M$ is obtained from the identity matrix by a change of basis. We conclude that $\det M$ is also a unit.

\ms $(\Leftarrow)$
If $\det M$ is a unit, then the elements  $\ell\mu_1,\dots,\ell\mu_N$ of $(R/J)_{(s+1)/2}$ are linearly independent. Thus, multiplication by $\ell$ from 
$(R/J)_{(s-1)/2}$ to $(R/J)_{(s+1)/2}$ is injective and this is enough to show that $\ell$ is a Lefschetz element; see, for example, \cite[Prop.~2.1]{MMN11}.  
\end{proof}  

\begin{corollary}
\label{cor}If $\kk$ is a field of  characteristic  zero, $R=\kk[x,y,z]$, 
 $n$ is an even integer, and
\begin{equation}\label{J}J=(x^{n+1},y^{n+1},z^{n+1}):(x+y+z)^{n+1},\end{equation} 
then the minimal homogeneous resolution of $R/J$ is given by
\begin{equation}
\notag
0\to R(-2n-2)\xrightarrow{c_3} R(-n-2)^{n+1}\xrightarrow{c_2}
R(-n)^{n+1}\xrightarrow{c_1}R,\end{equation} where
$$c_2 =B\transpose (A')^{-1}B+xD,$$
$c_1 $ is equal to the row vector of maximal order Pfaffians of $ c_2$, and $c_3 =c_1 \transpose$, as described in Theorem~{\rm\ref{main}}.
\end{corollary}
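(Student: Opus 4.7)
The plan is to apply Theorem~\ref{main}(\ref{main.d}) directly to the ideal $J$ of~(\ref{J}), with Proposition~\ref{5.1} supplying every required hypothesis. Before invoking the theorem, I would verify the standing assumptions. By Proposition~\ref{5.1}, the graded Betti numbers of $R/J$ are those listed in~(\ref{BN}); in particular $R/J$ is Gorenstein of socle degree $2n-1$, all minimal generators of $J$ have degree $n$, so $J_{n-1}=0$, and $J$ is quadratically presented. Proposition~\ref{5.1} also asserts that $x$ is a weak Lefschetz element for $R/J$. The integer $n$ is assumed even, and a Macaulay inverse system for $J$ is given explicitly by $\Phi_{2n-1}$ of~(\ref{PHI}); this is the additional piece of input needed to form the matrices $\bp$ and $\br$ of Theorem~\ref{NewTheorem} that go into Theorem~\ref{main}.

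Next I would invoke Theorem~\ref{main}. Parts~(\ref{main.a}) and~(\ref{main.b}) produce the alternating matrix $b_2$ of the linearly presented auxiliary ideal $I=\ann_{\Sym_\bullet U}(x\Phi_{2n-1})$, together with a set $g_{n+1},\dots,g_{2n+1}$ of elements of $J$ of degree $n$ that extend to a minimal generating set of $J$. Because $J$ is quadratically presented, Theorem~\ref{main}(\ref{main.c}) forces the $n\times n$ alternating matrix $A'$ to be invertible, and Theorem~\ref{main}(\ref{main.d}) then yields the claimed minimal homogeneous resolution with $c_2=B\transpose (A')^{-1}B+xD$, $c_1$ the row vector of maximal order Pfaffians of $c_2$, and $c_3=c_1\transpose$.

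The corollary is therefore a bookkeeping application of the main theorem and presents no real obstacle; the substantive work has already been carried out in Proposition~\ref{5.1}, which identifies the Betti shape and the weak Lefschetz element, and in Theorem~\ref{main}, which converts that data into the explicit presentation matrix. The only point requiring a word of care is that the description~(\ref{PHI}) of $\Phi_{2n-1}$ is a legitimate choice in the sense of Theorem~\ref{3.3}, namely that it is one of the allowed lifts of the Macaulay inverse system $\Phi_{2n-2}=x\Phi_{2n-1}$ of $I$ back to $D_{2n-1}U^*$; any such lift produces the same matrices $A'$, $B$, and $D$ up to the scalar indeterminacy inherent in Macaulay duality.
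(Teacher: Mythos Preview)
Your proposal is correct and follows exactly the paper's approach: apply Proposition~\ref{5.1} to verify the hypotheses of Theorem~\ref{main} (namely $J_{n-1}=0$, $x$ is a weak Lefschetz element, and $J$ is quadratically presented), then invoke part~(\ref{main.d}); the paper's proof is literally these two steps plus the explicit formula~(\ref{PHI}) for $\Phi_{2n-1}$. Your final caveat about ``lifts'' is unnecessary and slightly misdirected---here $\Phi_{2n-1}$ is the Macaulay inverse system of $J$ itself, determined up to a scalar, not an arbitrary lift of some $\Phi_{2n-2}$; different lifts would in fact change the matrices $A'$, $B$, $D$ (this is the whole point of Theorem~\ref{3.3} versus Theorem~\ref{3.2}), but no choice arises since $J$ fixes $\Phi_{2n-1}$---though this does not affect the validity of your argument.
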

\begin{proof}
Apply Proposition~\ref{5.1} to see that the hypotheses of Theorem~\ref{main} are satisfied. The element
$$\Phi_{2n-1}=(x+y+z)^{n+1}((x^ny^nz^n)^*)$$ of $D_{2n-1}R_1^*$ is a Macaulay inverse system for $J$.
\end{proof}
\begin{example}
 If $n=2$, then the matrices of Corollary~\ref{cor} are
$$\bp=\bmatrix
0 &3& 3 \\
 3& 3& 6 \\
3 &6& 3 \endbmatrix,\quad
\bp^{-1}=\frac 16\bmatrix
 -3& 1 & 1  \\
 1 & -1& 1  \\
 1 & 1 & -1 \endbmatrix,\quad
\br=\bmatrix
3 &6 &3 \\
0 &3 &3 \\
3 &3 &0 \endbmatrix,$$
$$A'=\bmatrix
0  &6 \\
 -6 &0 \endbmatrix,\quad
B=\bmatrix
-x+z &-y &0   \\
0    &z  &x-y \endbmatrix,\quad D=\frac 16\bmatrix
0 & -x& x  \\
x & 0  &-x \\
-x& x  &0  \endbmatrix,\quad\text{and}$$
$$c_2=\frac 16\bmatrix
0            & -x^2+xz-z^2& 2x^2-xy-xz+yz \\
x^2-xz+z^2     & 0        & -x^2+xy-y^2    \\
-2x^2+xy+xz-yz &x^2-xy+y^2 & 0            \endbmatrix$$
\end{example}
 
\begin{example}
If $n=4$, then the matrices of Corollary~\ref{cor} are
$$\bp=\bmatrix
0 & 0 & 0  &0 & 0 & 0 & 5  &10& 10& 5  \\
0 & 0 & 0  &5 & 10 &10 &5  &20& 30& 20 \\
0 & 0 & 0  &10 &10 &5  &20 &30 &20& 5  \\
0 & 5 & 10 &5  &20& 30& 0 & 10 &30 &30 \\
0 & 10 &10 &20 &30& 20& 10& 30 &30& 10 \\
0 & 10 &5  &30 &20& 5 & 30& 30 &10& 0  \\
5 & 5  &20 &0  &10& 30& 0  &0  &10& 20 \\
10 &20& 30& 10 &30& 30& 0  &10 &20& 10 \\
10 &30& 20 &30 &30& 10& 10 &20 &10& 0  \\
5  &20& 5  &30 &10& 0 & 20& 10& 0 & 0  \\
\endbmatrix,$$
 $$\bp^{-1} =\frac 1{70}\bmatrix
-35& 15  &15  &-5& -10& -5 &1  &3 & 3 & 1  \\
15 & -15 &5  & 9 & 2  & -7 &-3& -3& 3  &3  \\
15 & 5   &-15& -7& 2  & 9  &3 & 3 & -3 &-3 \\
-5 & 9   &-7 & -9 &6  & 1 & 5 & -1& -3 &3  \\
-10& 2   &2  & 6 & -9 & 6 & -6& 3 & 3  &-6 \\
-5 & -7  &9  & 1 & 6   &-9& 3 & -3 &-1 &5  \\
1  & -3 & 3  & 5 & -6 & 3 & -5& 5  &-3 &1  \\
3  & -3 & 3  & -1& 3  & -3& 5  &-7& 6  &-3 \\
3  & 3  & -3 & -3& 3  & -1& -3 &6 & -7 &5  \\
1  & 3  & -3 & 3 & -6 & 5 & 1&  -3 &5  &-5 \\
\endbmatrix,$$
$$\br=\bmatrix
5 & 20 &30 &20 &5  \\
0 & 10 &30 &30 &10 \\
10 &30 &30 &10 &0  \\
0  &0  &10 &20 &10 \\
0  &10 &20 &10 &0  \\
10& 20 &10 &0  &0  \\
0 & 0  &0  &5  &5  \\
0 & 0  &5  &5  &0  \\
0 & 5 & 5  &0  &0  \\
5 & 5  &0  &0  &0  \endbmatrix,\quad A'=\frac12\bmatrix
0  & 50 & 75 & 45 \\
-50& 0  & 85 & 75 \\
-75 &-85 &0  & 50 \\
-45& -75& -50& 0  \endbmatrix,$$
$$B=\frac 12\bmatrix
-2x+2z &-x-2y &0   &0    &0     \\
0     & -x+2z& -2y &0    &0     \\
0     & 0    & 2z  &x-2y &0     \\
0      &0    & 0   &x+2z &2x-2y \endbmatrix,$$
$$D=\frac 1{70} \bmatrix
0  & -5x& 5x & -3x &x   \\
5x & 0  & -4x& 5x & -3x \\
-5x& 4x & 0  & -4x& 5x  \\
3x & -5x& 4x & 0  & -5x \\
-x & 3x & -5x& 5x & 0   \\
\endbmatrix,$$
and the matrix $c_2$ appears in Table~\ref{Table 1}.

\begin{table}
\begin{center}
{\scriptsize $$\hskip-50pt\frac 1{70}\bmatrix
0                       & -10x^2+15xz-10z^2            & 5x^2-10xy-15xz+10yz+15z^2\\   
10x^2-15xz+10z^2          & 0                          & -4x^2-5xy-10y^2-3xz-15yz-9z^2\\
-5x^2+10xy+15xz-10yz-15z^2 &4x^2+5xy+10y^2+3xz+15yz+9z^2  & 0\\                         
2x^2-15xy-16xz+15yz+17z^2  &-4x^2-4xy-15y^2-4xz-26yz-15z^2& 4x^2+3xy+9y^2+5xz+15yz+10z^2\\ 
-18x^2+17xy+17xz-17yz    & 2x^2-16xy+17y^2-15xz+15yz    & -5x^2+15xy-15y^2+10xz-10yz \endbmatrix$$} 
{\scriptsize $$\frac 1{70}\bmatrix 
      &-2x^2+15xy+16xz-15yz-17z^2   &18x^2-17xy-17xz+17yz      \\
      &4x^2+4xy+15y^2+4xz+26yz+15z^2 &-2x^2+16xy-17y^2+15xz-15yz \\
      &-4x^2-3xy-9y^2-5xz-15yz-10z^2 &5x^2-15xy+15y^2-10xz+10yz  \\
      &0                          &-10x^2+15xy-10y^2          \\
      &10x^2-15xy+10y^2             &0                        \\
\endbmatrix$$}
 \caption{The matrix $c_2$ in the resolution of $R/J$ for $J$ given (\ref{J}), when $n=4$. The top matrix is the first three columns of $c_2$. The bottom matrix is the last two columns of $c_2$.}\label{Table 1}
\end{center}
\end{table}
\end{example}
\begin{example}
If $c=6$, then the matrix $c_2$ of Corollary~\ref{cor} is  the matrix given in Table~\ref{Table 3}.
\begin{table}
\begin{center}
{\scriptsize $$\frac 1{1848}\bmatrix
0                            &  -252x^2+420xz-252z^2                 \\
252x^2-420xz+252z^2             & 0                                  \\
-126x^2+252xy+420xz-252yz-378z^2& 84x^2+126xy+252y^2+42xz+378yz+210z^2  \\
56x^2-378xy-434xz+378yz+434z^2 &  -84x^2-112xy-378y^2-56xz-644yz-350z^2 \\
-21x^2+434xy+448xz-434yz-455z^2&  54x^2+63xy+434y^2+45xz+805yz+425z^2   \\
6x^2-455xy-457xz+455yz+461z^2  &  -24x^2-24xy-455y^2-24xz-886yz-455z^2  \\
-463x^2+461xy+461xz-461yz     &  6x^2-457xy+461y^2-455xz+455yz        \\
\endbmatrix$$
$$\frac 1{1848}\bmatrix
126x^2-252xy-420xz+252yz+378z^2     & -56x^2+378xy+434xz-378yz-434z^2      \\
-84x^2-126xy-252y^2-42xz-378yz-210z^2& 84x^2+112xy+378y^2+56xz+644yz+350z^2  \\
0                                 & -60x^2-70xy-210y^2-50xz-350yz-200z^2  \\
60x^2+70xy+210y^2+50xz+350yz+200z^2  & 0                                  \\
 -75x^2-75xy-350y^2-75xz-625yz-350z^2 & 60x^2+50xy+200y^2+70xz+350yz+210z^2   \\
54x^2+45xy+425y^2+63xz+805yz+434z^2  & -84x^2-56xy-350y^2-112xz-644yz-378z^2 \\
-21x^2+448xy-455y^2+434xz-434yz     & 56x^2-434xy+434y^2-378xz+378yz       \\
\endbmatrix$$
 $$\frac 1{1848}\bmatrix
21x^2-434xy-448xz+434yz+455z^2      &-6x^2+455xy+457xz-455yz-461z^2       \\
 -54x^2-63xy-434y^2-45xz-805yz-425z^2& 24x^2+24xy+455y^2+24xz+886yz+455z^2   \\
75x^2+75xy+350y^2+75xz+625yz+350z^2  &-54x^2-45xy-425y^2-63xz-805yz-434z^2  \\
-60x^2-50xy-200y^2-70xz-350yz-210z^2 &84x^2+56xy+350y^2+112xz+644yz+378z^2  \\
0                                 &-84x^2-42xy-210y^2-126xz-378yz-252z^2 \\
84x^2+42xy+210y^2+126xz+378yz+252z^2 &0                                  \\
-126x^2+420xy-378y^2+252xz-252yz    &252x^2-420xy+252y^2                  \\
\endbmatrix$$
$$\frac 1{1848}\bmatrix
463x^2-461xy-461xz+461yz       \\
-6x^2+457xy-461y^2+455xz-455yz  \\
21x^2-448xy+455y^2-434xz+434yz  \\
-56x^2+434xy-434y^2+378xz-378yz \\
126x^2-420xy+378y^2-252xz+252yz \\
-252x^2+420xy-252y^2            \\
0                             \endbmatrix$$}
\caption{The matrix $c_2$ in the resolution of $R/J$ for $J$ given (\ref{J}), when $n=6$. The top matrix is the first two columns of $c_2$; the second matrix is columns 3 and 4 of $c_2$; the third matrix is columns 5 and 6 of $c_2$; and  the bottom matrix is the last  column of $c_2$.}\label{Table 3}
\end{center}
\end{table}
\end{example}

\end{document}